\newcommand{\intdiv}{\hspace{2pt}}
\newcommand{\setdiv}{\hspace{2pt}\rvert\hspace{2pt}}
\newcommand{\biggsetdiv}{\hspace{2pt}\bigg\rvert\hspace{2pt}}
\newcommand{\abs}[1]{\left\lvert#1\right\rvert}
\newcommand{\norm}[2][]{{\left\| #2 \right\|}_{#1}}
\newcommand{\dims}{n} 
\newcommand{\stdindex}{k} 
\newcommand{\bbn}[1][]{{\mathbb{N}^{#1}}}
\newcommand{\bbr}{{\mathbb{R}}}
\newcommand{\bbrn}[1][\dims]{{\mathbb{R}^{#1}}}
\newcommand{\gln}[1][\dims]{\text{GL}_#1}
\newcommand{\calsn}[1][\dims-1]{{\mathcal{S}^{#1}}}
\newcommand{\calm}{{\mathcal{M}}}
\newcommand{\calh}{{\mathcal{H}}}
\newcommand{\calhn}[1][\dims]{{\calh^#1}}
\newcommand{\calhnd}[1][\stdindex]{{\calh_#1^\dims}}
\newcommand{\calp}[1][]{{\mathcal{P}_{#1}}}
\newcommand{\calr}{{\mathcal{R}}}
\newcommand{\cali}{{\mathcal{I}}}
\newcommand{\calip}{{\mathcal{I}^*}}
\newcommand{\calc}{{\mathcal{C}}}
\newcommand{\vol}{\text{vol}}
\newenvironment{problem}[1]{\vspace{5pt}\\ \noindent\textbf{Problem #1.} }{\\}
\newenvironment{theorem}[1]{\vspace{5pt} \noindent\textbf{Theorem #1.}}{}
\newenvironment{lemma}[1]{\vspace{5pt} \noindent\textbf{Lemma #1.} }{\\}
\newcommand*{\mint}[1]{%
  \mint@l{#1}{}%
}
\newcommand*{\mint@l}[2]{%
  \@ifnextchar\limits{%
    \mint@l{#1}%
  }{%
    \@ifnextchar\nolimits{%
      \mint@l{#1}%
    }{%
      \@ifnextchar\displaylimits{%
        \mint@l{#1}%
      }{%
        \mint@s{#2}{#1}%
      }%
    }%
  }%
}
\newcommand*{\mint@s}[2]{%
  \@ifnextchar_{%
    \mint@sub{#1}{#2}%
  }{%
    \@ifnextchar^{%
      \mint@sup{#1}{#2}%
    }{%
      \mint@{#1}{#2}{}{}%
    }%
  }%
}
\def\mint@sub#1#2_#3{%
  \@ifnextchar^{%
    \mint@sub@sup{#1}{#2}{#3}%
  }{%
    \mint@{#1}{#2}{#3}{}%
  }%
}
\def\mint@sup#1#2^#3{%
  \@ifnextchar_{%
    \mint@sup@sub{#1}{#2}{#3}%
  }{%
    \mint@{#1}{#2}{}{#3}%
  }%
}
\def\mint@sub@sup#1#2#3^#4{%
  \mint@{#1}{#2}{#3}{#4}%
}
\def\mint@sup@sub#1#2#3_#4{%
  \mint@{#1}{#2}{#4}{#3}%
}
\newcommand*{\mint@}[4]{%
  \mathop{}%
  \mkern-\thinmuskip
  \mathchoice{%
    \mint@@{#1}{#2}{#3}{#4}%
        \displaystyle\textstyle\scriptstyle
  }{%
    \mint@@{#1}{#2}{#3}{#4}%
        \textstyle\scriptstyle\scriptstyle
  }{%
    \mint@@{#1}{#2}{#3}{#4}%
        \scriptstyle\scriptscriptstyle\scriptscriptstyle
  }{%
    \mint@@{#1}{#2}{#3}{#4}%
        \scriptscriptstyle\scriptscriptstyle\scriptscriptstyle
  }%
  \mkern-\thinmuskip
  \int#1%
  \ifx\\#3\\\else_{#3}\fi
  \ifx\\#4\\\else^{#4}\fi  
}
\newcommand*{\mint@@}[7]{%
  \begingroup
    \sbox0{$#5\int\m@th$}%
    \sbox2{$#5\int_{}\m@th$}%
    \dimen2=\wd0 %
    \let\mint@limits=#1\relax
    \ifx\mint@limits\relax
      \sbox4{$#5\int_{\kern1sp}^{\kern1sp}\m@th$}%
      \ifdim\wd4>\wd2 %
        \let\mint@limits=\nolimits
      \else
        \let\mint@limits=\limits
      \fi
    \fi
    \ifx\mint@limits\displaylimits
      \ifx#5\displaystyle
        \let\mint@limits=\limits
      \fi
    \fi
    \ifx\mint@limits\limits
      \sbox0{$#7#3\m@th$}%
      \sbox2{$#7#4\m@th$}%
      \ifdim\wd0>\dimen2 %
        \dimen2=\wd0 %
      \fi
      \ifdim\wd2>\dimen2 %
        \dimen2=\wd2 %
      \fi
    \fi
    \rlap{%
      $#5%
        \vcenter{%
          \hbox to\dimen2{%
            \hss
            $#6{#2}\m@th$%
            \hss
          }%
        }%
      $%
    }%
  \endgroup
}
\theoremstyle{plain}
\newtheorem{thm}{Theorem}
\theoremstyle{definition}
\theoremstyle{remark}
\begin{document}

\title[Centroid bodies]{Local fixed point results for centroid body operators }
\author[C. Reuter]{Chase Reuter}

\address{Department of Mathematics 2750\\ North Dakota State University\\PO BOX 6050\\ Fargo, ND 58108-6050\\ USA}

\email{chase.reuter@ndsu.edu}

\subjclass[2010]{52A20,33C55,44A12}


\keywords{Convex bodies,  spherical harmonics, spherical Radon and Cosine transforms}

\begin{abstract}
We prove that, in a neighborhood of the Euclidean ball, there are no other fixed points of the $p$-centroid body operator, using spherical harmonic techniques. We also show that the Euclidean ball is locally the only body whose centroid body is a dilate of its polar intersection body.
\end{abstract}

\maketitle

\section{Introduction}
Characterizing the Euclidean space among all real normed spaces is one of the aims of the ten problems formulated in 1956 by  Busemann and Petty  \cite{BP}.  These problems lead to the study of certain operators on convex bodies in $\bbrn$, such as the intersection body operator  for the first Busemann-Petty problem.  Proving a particular property of the intersection body operator in each dimension resulted in the resolution of the first Busemann-Petty problem in 1996.  In the class of convex bodies, proving properties for operators globally is challenging.  The local study of such operators on convex bodies appears to be a  more approachable initial step.  In \cite{FNRZ}, it was established that, other than linear transformations of the Euclidean ball,  there are no additional fixed points of the intersection body operator in a neighborhood of the Euclidean ball.  A similar result was obtained in \cite{SZ} for the projection body operator. In  \cite{ANRY}, a local affirmative answer was obtained for the Busemann-Petty problems 5 and 8, by studying the local fixed points of the polar-intersection body operator.

In this paper we conduct a similar local study for several problems involving the centroid body operator. First, we consider its fixed points.
\begin{problem}{1} \textit{
    If for an origin-symmetric convex body $K\subset\bbrn$, $\dims\geq 3$ we have
    \begin{equation}\label{eqn:prob1}
        c\Gamma K=K,
    \end{equation}
    where $\Gamma K$ is the centroid body of $K$ and $c$ is some constant, need $K$ be an ellipsoid?}
\end{problem}

Below we will verify that the ball does indeed satisfy this condition, and that equation \eqref{eqn:prob1} is invariant under linear transformations.  Consequently, this equation holds for all ellipsoids. We prove the following result.

\begin{thm}\label{c1}
    Let $\dims\geq 3$.  If an origin-symmetric  convex body $K\subset\bbrn$ satisfies \eqref{eqn:prob1}  and is sufficiently close to the Euclidean ball in the Banach-Mazur distance, then $K$ must be an ellipsoid.
\end{thm}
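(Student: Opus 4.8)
The plan is to linearize equation~\eqref{eqn:prob1} at the Euclidean ball $B$, recast the linearized equation as a multiplier problem for the spherical cosine transform, and then upgrade the linear statement to the local nonlinear one by a Lyapunov--Schmidt reduction, following the strategy of \cite{FNRZ,ANRY}.

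The two structural ingredients (verified below) are that $\Gamma$ is $\gln$-equivariant, $\Gamma(TK)=T\,\Gamma K$, and that $B$ is a fixed point of~\eqref{eqn:prob1} for a suitable $c=c(\dims)$. From these, every centered ellipsoid solves~\eqref{eqn:prob1} with the same $c$, and the centered ellipsoids near $B$ form a smooth manifold of solutions whose tangent space at $B$, read off through radial-function perturbations, is exactly $\calh_0\oplus\calh_2$ --- the degree $0$ and degree $2$ spherical harmonics, i.e.\ dilations and volume-preserving linear maps. Writing a nearby body by its radial function $\rho_K=1+f$, we have
\[
 h_{\Gamma K}(u)=\frac{\dims}{\dims+1}\cdot\frac{\int_{S^{\dims-1}}\abs{\langle u,\theta\rangle}\,(1+f(\theta))^{\dims+1}\,d\theta}{\int_{S^{\dims-1}}(1+f(\theta))^{\dims}\,d\theta},
\]
so $f\mapsto h_{\Gamma K}$ is an analytic function of $f$ built from the cosine transform $Cg(u)=\int_{S^{\dims-1}}\abs{\langle u,\theta\rangle}\,g(\theta)\,d\theta$ and from scalar integral functionals of $f$. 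After eliminating the constant by passing to the volume-normalized operator $\tilde\Gamma$ (which is fully $\gln$-equivariant and fixes $B$), equation~\eqref{eqn:prob1} reads $F(f):=h_{\tilde\Gamma K}-h_K=0$ with $F(0)=0$; a direct computation, combined with the fact that to first order radial- and support-function perturbations of $B$ agree, yields
\[
 DF(0)=\frac{\dims}{\beta\,\abs{S^{\dims-1}}}\,C-\mathrm{Id}
\]
plus a rank-one correction on the constant functions, where $\beta$ is the radius of the ball $\Gamma B$.

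I would then diagonalize $DF(0)$ on spherical harmonics. By the Funk--Hecke theorem $C$ acts on $\calh_k$ as a scalar $\lambda_k$; the relevant values are $\lambda_k=0$ for odd $k$ (the kernel $\abs{t}$ is even), $\lambda_0=\int_{S^{\dims-1}}\abs{\theta_1}\,d\theta$, $\lambda_2=\lambda_0/(\dims+1)$, and $\abs{\lambda_k}<\lambda_0/(\dims+1)$, with alternating sign, for even $k\ge4$. By origin-symmetry only even harmonics occur, and it follows that $DF(0)$ vanishes precisely on $\calh_0\oplus\calh_2$ and, being of the form $-\mathrm{Id}+(\text{compact})$, restricts to a Banach-space isomorphism of the closed complement $R:=\bigoplus_{k\text{ even},\,k\ge4}\calh_k$. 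The equality $\lambda_2=\lambda_0/(\dims+1)$ is forced by the ellipsoid solutions, so the substance of this step --- and, I expect, the main technical obstacle --- is the multiplier computation itself: producing a closed form, or a sharp enough estimate, for the cosine-transform eigenvalues and establishing that none of them for $k\ge4$ equals $\lambda_2$.

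Finally I would use the $\gln$-invariance of~\eqref{eqn:prob1} to remove the $\calh_0\oplus\calh_2$ directions. Given an origin-symmetric solution $K$ sufficiently close to $B$ in Banach--Mazur distance, the a priori smoothness of bodies satisfying~\eqref{eqn:prob1} lets us first bring $K$ genuinely close to $B$ in, say, $C^{2,\alpha}(S^{\dims-1})$ after a linear map, and then pick $T\in\gln$ near the identity so that the radial deviation of $TK$, after a dilation, lies in $R$. For such a body $F(f)=0$ with $DF(0)|_R$ an isomorphism, so $F$ is locally injective at $0$ and $f=0$; equivalently, the bifurcation equation of the Lyapunov--Schmidt reduction over $\calh_0\oplus\calh_2$ is already satisfied identically by the known ellipsoid family, so near $B$ the solution set of~\eqref{eqn:prob1} is exactly that family, and $K$ is an ellipsoid. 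The remaining points, routine but needing care, are the choice of function spaces in which the radial/support correspondence and $K\mapsto h_{\Gamma K}$ are smooth without loss of derivatives, and the passage from Banach--Mazur to $C^{2,\alpha}$ closeness.
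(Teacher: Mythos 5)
Your linearized skeleton is the same one the paper relies on: the cosine transform diagonalizes on spherical harmonics, the normalized operator has eigenvalue $1$ on $\calh_0\oplus\calh_2$ (the ellipsoid directions), eigenvalue $-1/(\dims+3)$ on $\calh_4$, and the even eigenvalues decrease in modulus to $0$, so the linearization is invertible transverse to the ellipsoid family; your volume-normalized $\tilde\Gamma$ is also a legitimate way to dispose of the free constant (the paper instead proves the stability bound $C_K=C_B+O(\epsilon)$). But you misplace the difficulty: the multiplier computation you call the main obstacle is classical and explicit (Funk--Hecke/Groemer, quoted in the paper), while the two steps you label ``routine but needing care'' are genuine gaps in your argument. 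First, your Lyapunov--Schmidt/inverse-function step needs $f\mapsto h_{\tilde\Gamma K}-h_K$ to be $C^1$ on a Banach space (you propose $C^{2,\alpha}$) in which $K$ is \emph{quantitatively} close to $B$, whereas the hypothesis is only Banach--Mazur, i.e.\ $L^\infty$-type, closeness. Upgrading this requires a bootstrap through the fixed-point equation: the cosine transform does smooth $h_K$, but transferring that gain back to $\rho_K$ (the radial/support correspondence, which enters your $F$) needs quantitative strict-convexity control that you neither establish nor sketch; without it the inverse function theorem simply cannot be invoked. Second, the exact normalization putting the radial deviation of $TK$ into $R=\oplus_{k\ge 4}\calh_k$ --- killing the degree-$2$ harmonic of $\rho_{TK}$ \emph{exactly} by a choice of $T$ --- is asserted, not proved; it needs its own implicit-function or variational argument on the $\gln$-orbit.

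The paper's proof is built precisely to avoid both issues, and this is where your route genuinely diverges. It never differentiates the nonlinear map: the linearization error is controlled by the pointwise Lipschitz estimate \eqref{LinearizationError}, valid under the mere $L^\infty$ bound $1-\epsilon\le\rho_K\le1+\epsilon$; the degree-$2$ harmonic is handled by the isotropic position, which kills $\calp[2]\rho_K$ only approximately (estimate \eqref{p2r}), and that suffices; and the conclusion comes from the $L^2$ strong-contraction Lemma 4 of ANRY applied to $\calm=C_B\kappa_\dims^{-1}(\calc-\calc\calp[0]-\calc\calp[2])$, which directly forces $\rho_K$ to be constant. So as written your proposal has real holes exactly at the points where it departs from this scheme; to close them you would either have to carry out the nontrivial regularity bootstrap and exact $\calh_2$-normalization, or replace your IFT step by the paper's $L^2$ contraction argument in the isotropic position.
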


A similar result is obtained for the $p$-centroid body operators $\Gamma_p$, $p\in (1, +\infty)$.  

\begin{thm}\label{cp}
    Let $\dims\geq 3$.  If an origin-symmetric  convex body $K\subset\bbrn$ satisfies 
        \begin{equation}\label{eqn:prob2}
        c\Gamma_p K=K,
    \end{equation}
    and is sufficiently close to the Euclidean ball in the Banach-Mazur distance, then $K$ must be an ellipsoid.
\end{thm}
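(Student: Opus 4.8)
The strategy is to follow the linearization-plus-implicit-function-theorem scheme of \cite{FNRZ,SZ,ANRY}, with the $p$-centroid operator contributing a linearized operator whose spherical-harmonic multipliers must be analyzed. Parametrize an origin-symmetric convex body $K$ near the unit ball $B$ by the even function $f$ on $S^{\dims-1}$ with $\rho_K=1+f$; near $B$ the support function obeys $h_K=1+f+O(\norm{f}^2)$, so radial and support perturbations agree to first order (this uses the smoothness, and the agreement of first derivatives at $B$, of the radial$\,\leftrightarrow\,$support correspondence). Since $h_{\Gamma_pK}(u)^p$ is a fixed positive multiple of $\vol(K)^{-1}\int_K\abs{\langle x,u\rangle}^p\,dx$ (normalized so that $\Gamma_pB=B$), passing to polar coordinates gives
\[
 h_{\Gamma_pK}(u)^p\;\propto\;\frac{T_p\!\big[(1+f)^{\dims+p}\big](u)}{\int_{S^{\dims-1}}(1+f)^{\dims}\,dv},\qquad T_pg(u):=\int_{S^{\dims-1}}\abs{\langle u,v\rangle}^p\,g(v)\,dv,
\]
so $\Gamma_p$ is controlled by the $L_p$-cosine transform $T_p$. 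Writing the problem as $\Phi(f,c):=c\,h_{\Gamma_pK}-h_K=0$, we have $\Phi(0,1)=0$ and $D\Phi(0,1)(\dot f,\dot c)=\dot c\,\mathbbm{1}+(\mathcal{L}-\mathrm{Id})\dot f$, with $\mathcal{L}$ rotation-equivariant, hence a Funk--Hecke multiplier: a short computation with the expansion above shows $\mathcal{L}$ acts as $\mathrm{Id}$ on $\calh_0^\dims$ (the dilation mode), as $0$ on the odd harmonics, and as $\mu_k\mathrm{Id}$ on $\calh_k^\dims$ for even $k\ge2$, where $\mu_k=\tfrac{\dims+p}{p}\,\lambda_k/\lambda_0$ and $T_p$ acts as $\lambda_k\mathrm{Id}$ on $\calh_k^\dims$.

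Two facts about the multipliers drive the argument: (i) $\mu_2=1$; and (ii) $\abs{\mu_k}<1$ for every even $k\ge4$, with $\inf_{k\ge4}(1-\abs{\mu_k})>0$. Fact (i) needs no computation: every ellipsoid near $B$ satisfies \eqref{eqn:prob2} with $c=1$ (by linear invariance, which we verify using $\Gamma_p(TK)=T\Gamma_pK$), so differentiating along the ellipsoid family at $B$ forces $\mathcal{L}-\mathrm{Id}$ to annihilate the traceless tangent direction $\calh_2^\dims$, i.e. $\mu_2=1$. For (ii) we invoke the classical recursion $\mu_{k+2}=\mu_k\cdot\frac{p-k}{\dims+p+k}$ for the spherical-harmonic multipliers of the cosine transform: each factor $\frac{p-k}{\dims+p+k}$ has modulus strictly less than $1$ once $k\ge2$, so starting from $\mu_2=1$ the sequence $\abs{\mu_k}$ is non-increasing along even $k\ge2$, whence $\abs{\mu_k}\le\abs{\mu_4}=\frac{\abs{p-2}}{\dims+p+2}<1$ for $k\ge4$ and the uniform gap follows. (For $p=2$ all these multipliers vanish, consistent with $\Gamma_2K$ being an ellipsoid for every $K$.) Hence $\ker D\Phi(0,1)=\big(\calh_0^\dims\oplus\calh_2^\dims\big)\times\{0\}$, which is exactly the tangent space at $B$ of the $\tfrac{\dims(\dims+1)}{2}$-dimensional manifold of ellipsoids.

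To pass from this infinitesimal rigidity to the theorem, we use that $\Gamma_p(TK)=T\Gamma_pK$ makes \eqref{eqn:prob2} invariant under $\gln$: by a tubular-neighbourhood argument around the orbit $\gln\cdot B$, every $K$ sufficiently near $B$ equals $TK'$ with $T\in\gln$ near the identity and $K'$ whose radial perturbation lies in the transverse slice $W:=\overline{\bigoplus_{k\ge4,\,k\ \mathrm{even}}\calh_k^\dims}$, and $K$ solves \eqref{eqn:prob2} if and only if $K'$ does. It therefore suffices to show that $(0,1)$ is the only zero of $\Phi$ with $f\in W$ near $(0,1)$. Composing $\Phi$ with the projection $Q$ that kills $\calh_2^\dims$, the derivative $D(Q\Phi)(0,1)\colon W\times\bbr\to W\oplus\calh_0^\dims$, $(\dot f,\dot c)\mapsto\big((\mathcal{L}-\mathrm{Id})\dot f,\,\dot c\,\mathbbm{1}\big)$, is an isomorphism: $\dot c\mapsto\dot c\,\mathbbm{1}$ identifies $\bbr$ with the line $\calh_0^\dims$, while $(\mathcal{L}-\mathrm{Id})$ restricted to $W$ equals $-\mathrm{Id}$ plus a compact operator and is injective by (ii), hence boundedly invertible on $W$. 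The implicit function theorem then forces $Q\Phi(f,c)=0$, $f\in W$, to have the unique local solution $(0,1)$; since $\Phi=0$ implies $Q\Phi=0$, we obtain $K'=B$, so $K$ is an ellipsoid.

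The main obstacle is the functional-analytic coordination rather than any single estimate. One first upgrades the Banach--Mazur hypothesis to $C^{2,\alpha}$-closeness to $B$ by a regularity bootstrap exploiting that $T_p$ is smoothing; one sets up spaces (e.g. even $C^{2,\alpha}(S^{\dims-1})$) on which $\Phi$ is $C^1$, which requires the smoothness near $B$ of the radial$\,\leftrightarrow\,$support correspondence and of $t\mapsto t^{1/p}$ on $(0,\infty)$, convexity of $\Gamma_pK$ (classical for $p\ge1$), and the mapping properties of $T_p$; and one makes the $\gln$-slice construction precise, exactly as in \cite{FNRZ,SZ,ANRY}. The one genuinely computational point, the uniform gap in (ii), reduces as indicated to the elementary recursion for the cosine-transform multipliers. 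The case $p=1$ is treated separately as Theorem~\ref{c1}.
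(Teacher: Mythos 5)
Your multiplier analysis is the same as the paper's and is correct: the linearization of $c\,h_{\Gamma_p K}$ at the ball is a Funk--Hecke multiplier operator built from the $p$-cosine transform, the signed recursion $\mu_{k+2}=\mu_k\,\frac{p-k}{\dims+p+k}$ is exactly the ratio $m_{p,k+2}/m_{p,k}$ used in the paper, the eigenvalue $1$ on $\calhnd[2]$ reflects the $\gln$-invariance of \eqref{eqn:prob2}, and the decisive bound $\abs{\mu_4}=\abs{2-p}/(\dims+2+p)<1$ together with monotone decay is precisely the paper's contraction check. Where you genuinely diverge is the rigidity mechanism: you propose an implicit-function-theorem argument in a smooth function space ($C^{2,\alpha}$), transverse to the $\gln$-orbit of the ball, whereas the paper never leaves the $L^\infty/L^2$ level. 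It places $K$ in isotropic position, which kills the degree-two harmonic only \emph{approximately} (estimate \eqref{p2r}), tracks the linearly invariant constant $C_K=C_B+O(\epsilon)$ by hand, bounds the nonlinearity pointwise using positivity of $\calc^p$, and then invokes the strong-contraction Lemma 4 of \cite{ANRY}, whose hypotheses require nothing beyond $1-\epsilon\leq\rho_K\leq 1+\epsilon$ and an $L^2$ bound on the linearization error. The payoff of the paper's route is that the Banach--Mazur hypothesis is used as is; the payoff of yours, if completed, would be a cleaner conceptual picture (kernel of the linearization equals the tangent space of the ellipsoid manifold).

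The genuine gap is the step you dispose of in one sentence: upgrading Banach--Mazur closeness to $C^{2,\alpha}$-closeness ``by a regularity bootstrap exploiting that $T_p$ is smoothing.'' The hypothesis gives only $1-\epsilon\leq\rho_K\leq1+\epsilon$ after a linear map, and your $\Phi$ must be $C^1$ on a space in which the IFT's local uniqueness applies, so you must show every BM-close solution of \eqref{eqn:prob2} is $C^{2,\alpha}$-small \emph{before} knowing it is an ellipsoid. The equation does smooth $h_K$ (it is $(\calc^p\rho_K^{\dims+p})^{1/p}$ up to constants), but the bootstrap stalls at the passage back from regularity of $h_K$ to regularity of $\rho_K$: control of $h_K$ in $C^{1,\alpha}$ or even $C^{2,\alpha}$ does not by itself control $\rho_K$ beyond the Lipschitz bound coming from convexity, unless one also has two-sided curvature bounds, i.e.\ unless $h_K$ is already $C^2$-pinched near the constant. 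One can likely rescue this (Lipschitz-smallness of $\rho_K$ is free, $\calc^p$ gains more than two derivatives on the $L^2$-Sobolev scale since $m_{p,k}=O(k^{-(\dims+2p)/2})$, and once $\norm[C^{2,\alpha}]{h_K-1}$ is small the curvature pinching lets $\rho_K$ inherit the regularity), but none of this is routine, and the same remark applies to the unproven $\gln$-slice claim that the perturbation can be placed \emph{exactly} in $W=\overline{\oplus_{k\geq4}\calhnd}$ (the paper's isotropic position achieves only the approximate statement \eqref{p2r}) and to the need to show $c=1+O(\epsilon)$, which cannot be normalized away since $C_K$ is invariant under $\gln$. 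As written, the proposal is a plausible program whose hardest steps are acknowledged but not carried out; the paper's proof is organized specifically to avoid them.
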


In addition, we prove a local result involving the $p$-centroid body and the polar intersection body of $K$:
\begin{thm}\label{ci}
    Let $\dims\geq 3$ and $p \geq 1$.  If an origin-symmetric  convex body $K\subset\bbrn$ satisfies 
        \begin{equation}\label{eqn:prob3}
        c\Gamma_p K=\cali^*K,
    \end{equation}
    and is sufficiently close to the Euclidean ball in the Banach-Mazur distance, then $K$ must be an ellipsoid.
\end{thm}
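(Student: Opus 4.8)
The plan is to treat \eqref{eqn:prob3} as a perturbation problem near the Euclidean ball $B^n$ and to run the linearization / Lyapunov--Schmidt scheme used for the intersection and projection body operators in \cite{FNRZ,SZ,ANRY}. Write $\rho_K=1+g$ for the radial function of $K$, with $g$ even since $K$ is origin-symmetric. Two structural facts drive everything: both $\Gamma_p$ and $\mathcal{I}^*$ are $\mathrm{O}(n)$-equivariant, and, using the transformation rules of $\Gamma_p$ and of the polar intersection body under $\mathrm{GL}_n$, equation \eqref{eqn:prob3} is $\mathrm{GL}_n$-invariant (with $c$ rescaled by a power of $|\det T|$), in particular dilation-invariant. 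Hence the local solution set of \eqref{eqn:prob3} contains the $\tfrac{n(n+1)}{2}$-dimensional manifold of ellipsoids, whose tangent space at $B^n$ is spanned by the dilation direction (degree-$0$ harmonic) together with the degree-$2$ spherical harmonics; the theorem asserts that this is all of it. Since the $L_p$-cosine transform $f\mapsto\int_{S^{n-1}}|\langle u,\theta\rangle|^p f(\theta)\,d\theta$ is smoothing of order $(n+2p)/2$ while the spherical Radon (Funk) transform $\mathcal{R}$ is invertible on even functions with $\mathcal{R}^{-1}$ losing only order $(n-2)/2$, and $p\ge1$, a standard bootstrap argument shows that a convex body satisfying \eqref{eqn:prob3} and close to $B^n$ in the Banach--Mazur distance is, after normalizing by a linear map, automatically $C^\infty$ and $C^\infty$-close to $B^n$; this lets me work on a fixed Sobolev scale $H^s(S^{n-1})$ with $s$ large.

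Next I would rewrite \eqref{eqn:prob3}, equivalently, as an equation of support functions on $S^{n-1}$: $c\,h_{\Gamma_p K}(u)\,\mathcal{R}(\rho_K^{\,n-1})(u)=n-1$, where $h_{\Gamma_p K}(u)^p$ is proportional to $\bigl(\int_{S^{n-1}}|\langle u,\theta\rangle|^p\rho_K(\theta)^{n+p}\,d\theta\bigr)\big/\bigl(\int_{S^{n-1}}\rho_K^{\,n}\bigr)$. Set $F(\rho,c):=c\,h_{\Gamma_p K}\,\mathcal{R}(\rho^{\,n-1})-(n-1)$; this is a $C^\infty$ map near $(1,c_0)$, where $c_0$ is the constant for which $c_0\Gamma_p B^n=\mathcal{I}^*B^n$. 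By $\mathrm{O}(n)$-equivariance, $D_\rho F(1,c_0)$ is a spherical-harmonic multiplier operator on even functions, and a direct computation with the integral formulas and the classical multipliers of the $L_p$-cosine and Funk transforms gives, on the degree-$m$ even harmonics, the multiplier $\alpha\bigl(c_0\beta\,\lambda_m^{(p)}-\gamma\,\mu_m\bigr)$ with $\alpha,\beta,\gamma$ independent of $m$, where $\lambda_m^{(p)}\propto(-1)^{m/2}\,\Gamma\!\bigl(\tfrac{m-p}{2}\bigr)\big/\Gamma\!\bigl(\tfrac{m+n+p}{2}\bigr)$ and $\mu_m\propto(-1)^{m/2}\,\Gamma\!\bigl(\tfrac{m+1}{2}\bigr)\big/\Gamma\!\bigl(\tfrac{m+n-1}{2}\bigr)$ are the cosine- and Funk-transform multipliers; the $\partial_c$-derivative contributes a rank-one term landing in the degree-$0$ component.

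The heart of the matter is the arithmetic claim that this multiplier is nonzero for every even $m\ge4$, knowing — from the $\mathrm{GL}_n$-invariance, which forces the ellipsoidal directions into the kernel — that it vanishes at $m=2$. For $p\notin2\mathbb{Z}_{>0}$ this reduces, writing $x=m/2$ and $t_x:=\Gamma\!\bigl(x-\tfrac p2\bigr)\Gamma\!\bigl(x+\tfrac{n-1}{2}\bigr)\big/\bigl(\Gamma\!\bigl(x+\tfrac{n+p}{2}\bigr)\Gamma\!\bigl(x+\tfrac12\bigr)\bigr)$, to showing $t_x\ne t_1$ for all integers $x\ge2$. The key tool is the exact recursion $t_{x+1}/t_x=1-\dfrac{(p+1)\bigl(x+\tfrac n4\bigr)}{\bigl(x+\tfrac{n+p}{2}\bigr)\bigl(x+\tfrac12\bigr)}<1$ for $x\ge1$; I would combine it with sign considerations — the sign of $t_x$ is the sign of $\Gamma\!\bigl(x-\tfrac p2\bigr)$, which alternates with $x$ as long as $m<p$ (so $t_x\ne t_1$ whenever they have opposite signs) — and with the decay $t_x\asymp x^{-(p+1)}\to0$ that rules out equality for all large $x$. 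The delicate part, which I expect to be the main obstacle, is the intermediate range $m\approx p$ where the signs agree and one must show the magnitude is genuinely never equal to $|t_1|$; the recursion identity together with a unimodality argument for $|t_x|$ should suffice, but this requires a careful case analysis uniform in $n\ge3$ and $p\ge1$. For $p$ an even positive integer the situation is in fact easier: $\lambda_m^{(p)}=0$ for all $m>p$ (since $|\langle u,\theta\rangle|^p$ is then a polynomial of degree $p$), making the multiplier trivially nonzero there because $\mu_m\ne0$, and only the finitely many $4\le m\le p$ remain, handled directly (reading the Gamma quotients as limits).

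Granting the arithmetic claim, the rest is routine. On even harmonics of degree $\ge4$, $D_\rho F(1,c_0)$ is an isomorphism between the relevant Sobolev spaces; on degree $0$ it is onto once the free parameter $\delta=c-c_0$ is included; on degree $2$ it vanishes, contributing the cokernel. Thus $DF(1,c_0)$ is Fredholm and $\ker DF(1,c_0)$ equals the $\tfrac{n(n+1)}{2}$-dimensional space of infinitesimal dilations and degree-$2$ deformations — precisely the tangent space at $B^n$ to the ellipsoid manifold. A Lyapunov--Schmidt reduction then identifies the local solution set of $F=0$ (which contains the solution set of \eqref{eqn:prob3}) with the zero set of a smooth map from a neighborhood of $0$ in this kernel to the finite-dimensional cokernel; since the smooth ellipsoid manifold is contained in that solution set and has tangent space equal to the entire kernel at $B^n$, it must be open in, hence locally equal to, the solution set. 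Therefore every $K$ satisfying \eqref{eqn:prob3} sufficiently close to $B^n$ is an ellipsoid.
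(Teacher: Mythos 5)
Your route is genuinely different from the paper's: you propose a regularity bootstrap plus a Fredholm/Lyapunov--Schmidt reduction, identifying the kernel of the linearization at the ball with the tangent space of the ellipsoid orbit, whereas the paper stays in $L^2$, rewrites the equation (after inverting the Radon transform) as $(\rho_K-r_0)=-(1+O(\epsilon))\,\tfrac{n+p}{p(n-1)}\mathcal{R}^{-1}\mathcal{C}^p(\rho_K-r_0)+\text{(higher order)}$, argues that the normalized operator has small norm on even harmonics of degree at least $4$, and controls the nonlinear remainder by an explicit dyadic decomposition, with the degree-$2$ part removed via the isotropic position --- no implicit function theorem or smoothness bootstrapping. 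Both arguments, however, hinge on the same spectral quantity, and you correctly call it ``the heart of the matter.''

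That heart is exactly where your proposal has a genuine gap, and it cannot be closed as stated. You leave the nonvanishing of the degree-$m$ multiplier for even $m\ge 4$ (your claim $t_x\neq t_1$ for integers $x\ge 2$) to a ``careful case analysis'' that you expect to work uniformly in $n\ge 3$, $p\ge 1$; in fact the claim is false. On degree-$m$ harmonics the linearization of $c\,h_{\Gamma_pK}-h_{\mathcal{I}^*K}$ at the ball is, up to a positive factor, $\tfrac{n+p}{p}\tfrac{m_{p,m}}{m_{p,0}}+(n-1)\tfrac{r_m}{r_0}$, where $m_{p,m}$ and $r_m$ are the $p$-cosine and Radon eigenvalues (the relative plus sign comes from the polarity in $\mathcal{I}^*$). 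At $m=2$ this is $1-1=0$, as forced by the ellipsoid solutions; at $m=6$ it equals $\tfrac{(p-2)(p-4)}{(n+p+2)(n+p+4)}-\tfrac{15}{(n+1)(n+3)}$, which vanishes precisely when $(p-2)(p-4)(n+1)(n+3)=15(n+p+2)(n+p+4)$. For $n=3$ this happens at $p=37$ (indeed $\tfrac{35\cdot 33}{42\cdot 44}=\tfrac{5}{8}=\tfrac{15}{24}$, equivalently $t_3=t_1$), for $n=4$ at $p=22$, for $n=5$ at $p=17$, and so on. At such pairs $(n,p)$ the linearization annihilates the entire degree-$6$ harmonic space, so its kernel is strictly larger than the tangent space of the ellipsoid manifold; your reduced finite-dimensional map is then not forced to vanish only along the ellipsoid branch, the final ``tangent to the whole kernel, hence locally equal'' step collapses, and one would need an actual bifurcation analysis to say anything. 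So your scheme can at best prove the theorem for those $(n,p)$ at which the multiplier is nonzero for every even $m\ge 4$ --- a condition you neither verify nor, as these examples show, can verify uniformly in $p\ge 1$. (The same exceptional parameters are also the delicate point for the paper's own telescoping-product bound, whose factors are no longer all less than $1$ for large $p$; this confirms that the intermediate range $m\approx p$ you flagged is not a technicality but the true obstruction.)
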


Theorem \ref{ci} is similar  to several open problems listed by Gardner in \cite[pg 336-337]{Gar}, relating the intersection body of $K$ with the difference body $\Delta K$ and the cross section body $CK$ of $K$:

\bigskip

{\bf Problem 8.1} Suppose that $K$ is a convex body such that $\Delta K$ and $\cali^*K$ are homothetic. Must $K$ be a centered ellipsoid?

\bigskip

{\bf Problem 8.9 (ii)} If $\Pi K$ is homothetic to either $\cali K$ or $CK$, is $K$ an ellipsoid?

\bigskip

In two dimensions, the answer to Problem 1 is known to be affirmative: the only  convex bodies which are dilates of their centroid bodies are ellipses. This was proven by Petty in \cite[Theorem 5.6]{Petty}. 
 Indeed,  the centroid body of $K$ has an important role in fluid statics. If $K$ is an origin-symmetric convex body with density 1/2 (with water having density 1), each point on the boundary of the centroid body of $K$ is the center of mass of the underwater part of $K$ in a given direction, and $\Gamma K$ is the convex hull of the locus of all such centers of mass in all directions. The flotation properties of $K$ are governed by three Theorems of Dupin for $\Gamma K$ (see for example \cite{Ry1}). In particular, in two dimensions, the Third Theorem of Dupin  states that the radius of curvature $R(\xi)$ of the centroid body  $\Gamma K$ at a boundary point perpendicular to a unit direction $\xi$ is proportional to the cube of the length of the chord $\ell(\xi)=K\cap \xi^\perp$,
\[
R(\xi)=\frac{\ell(\xi)^3}{24},
\]
 If we assume that $\Gamma K$ is homothetic to $K$, then the same relation  is true between the curvature of $K$ at a point and the cube of the length of the corresponding central section. But in the two dimensional case this is a restatement of the Busemann-Petty Problem number 8  in dimension 2 \cite{BP,Petty}.

\section{Analytic statement of the problems}
   
   Let $\calsn=\{x\in \mathbb{R}^n: |x|=1\}$ be the unit sphere in $\mathbb{R}^n$, where $|x|$ denotes the Euclidean norm. 
      Let $\sigma_{\dims-1}$ be the spherical Lebesgue measure on $\calsn$ and $\lambda_\dims$ be the $\dims$-dimensional Lebesgue measure. 
      A convex body $K$ is a convex compact subset of $\mathbb{R}^n$ with non-empty interior.   We will assume that the origin is an interior point of convex bodies.  $K$ is origin symmetric if for every $x\in K$, $-x \in K$.
      
      Given a convex body $K\subset\bbrn$,  the support function $h_K$ of $K$ is defined as  
      \[
      h_K(\theta)=\max\{x \cdot \theta\setdiv x\in K\},
      \]       
      and its radial function  is 
      \[
\rho_K(\theta)=\max\{t>0:\, t\theta\in K   \}.
\]

       A body $L$ is called the centroid body of a convex body $K$ if
        \[h_{L}(\phi)=\frac1{\vol_\dims(K)}\int_K\abs{\phi\cdot x}\intdiv d\lambda_n(x), \hspace{10pt}\forall \phi\in\calsn.\]
        The body $L$ is denoted as $\Gamma K$.  Switching this integral to polar coordinates and simplifying yields
        \begin{align*}
            h_{\Gamma K}(\phi)&=\frac 1{\vol_\dims(K)}\int_{\calsn}\int_0^{\rho_K(\theta)}\abs{\phi\cdot(r\theta)}r^{\dims-1}\intdiv dr\intdiv d\sigma_{\dims-1}(\theta)\\
            &=\frac 1{(\dims+1)\vol_\dims(K)}\int_{\calsn}\abs{\phi\cdot\theta}\rho_{K}^{\dims+1}(\theta)\intdiv d\sigma_{\dims-1}(\theta).
        \end{align*}
        Examining the right hand side, we observe that it is a multiple of the cosine transform.  The cosine transform $\calc f$ of an integrable function $f:\calsn\rightarrow\bbr$  is defined as
        \[\calc f(\phi):=\int_{\calsn}\abs{\phi\cdot\theta}f(\theta)\intdiv d\sigma_{\dims-1}(\theta).\]
        Using this to rewrite the support function of the centroid body gives
        \begin{equation}\label{CentroidBodyReformulation}h_{\Gamma K}=\frac{1}{(\dims+1)\vol_\dims(K)}\calc\rho_{K}^{\dims+1}.\end{equation}
        Equation \eqref{eqn:prob1} can now be rewritten analytically  as 
        \begin{equation}\label{CentroidBody}
        h_{K}=\frac{c}{(\dims+1)\vol_\dims(K)}\calc\rho_{K}^{\dims+1}.
        \end{equation}



A body $L$ is called the intersection body of the convex body $K$ if
        \[h_{L}(\theta)=\vol_\dims(K\cap\theta^\perp), \hspace{10pt}\forall \theta\in\calsn.\]
        The body $L$ is denoted as $\cali K$.  Expressing the volume as an integral and passing to polar coordinates,  we obtain
        \begin{align*}
            \rho_{\cali K}(\phi)&=\int_{K\cap \phi^\perp}\intdiv d\lambda_{n-1}(x)\\
                &=\int_{\calsn\cap \phi^\perp}\int_0^{\rho_K(\theta)}r^{\dims-2}\intdiv dr\intdiv d\sigma_{\dims-2}(\theta)\\
                &=\frac1{\dims-1}\int_{\calsn\cap \phi^\perp}\rho_K^{\dims-1}(\theta)\intdiv d\sigma_{\dims-2}(\theta).
        \end{align*}
        The right hand side is almost the spherical Radon transform.  The spherical Radon transform of an integrable function $f\in L^1(\calsn)$ is denoted $\calr f$ and is defined as
        \[\calr f(\phi)=\int_{\calsn\cap\phi^\perp}f(\theta)\intdiv d\sigma_{\dims-2}(\theta).\]
        Thus, the radial function of the intersection body can be expressed in terms of the Radon transform of the radial function of the original body,  by
        \begin{equation}\label{IntersectionBodyReformulation}\rho_{\cali K}=\frac 1{\dims-1}\calr \rho_K^{\dims-1}.\end{equation}
        A body $L$ is the polar body of a convex body $K$ if
        \[h_L(\theta)=\frac1{\rho_K(\theta)}, \forall\theta\in\calsn.\]
        The body $L$ is denoted by $K^*$. The polar intersection body of a body $K$ is $\cali^*K=(\cali K)^*$.  Hence, 
        \[h_{\cali^* K}=(\dims-1)(\calr \rho_K^{\dims-1})^{-1}.\]
        Combining this with  equation \eqref{CentroidBodyReformulation} gives the reformulation of $c\Gamma K=\cali^* K$ as the following equation,
        \begin{equation}\label{IStarReformulation}\frac c{(\dims^2-1)\vol_\dims(K)}\calc\rho_K^{\dims+1}=(\calr \rho_K^{\dims-1})^{-1}.\end{equation}
To simplify the exposition, we will first consider the case $p=1$. The analytic reformulation of equation \eqref{eqn:prob2} will be derived in Section \ref{five}, and the analytic reformulation of \eqref{eqn:prob3} will be considered in Section \ref{six}.

\bigskip

\textbf{Remark:} Even though Theorem \ref{c1}  does not make such assumptions on the body $K$, if $K$ satisfies \eqref{eqn:prob1}, then it must be centrally symmetric, strictly convex and smooth, since $\Gamma K$ has those properties (see \cite[Sec. 9.1]{Gar}).  



\newpage
        
\subsection{Ellipsoids as solutions}

\subsubsection{$c\Gamma K=K$}
 Let us denote by $C_K$ the constant $c$ in \eqref{CentroidBody} corresponding to a body $K$ that satisfies \eqref{eqn:prob1}. If we evaluate \eqref{CentroidBody} when $K$ is the unit Euclidean ball $B$, we obtain
    \[
       1= h_B(\theta)=\frac{C_B}{(\dims+1)\vol_{\dims}(B)}\calc 1
            =\frac{2C_B\kappa_{\dims-1}}{(\dims+1)\kappa_\dims},
            \]
            since $\calc 1=2\kappa_{\dims-1}$, where $\kappa_n$ represents the volume of the unit Euclidean ball in $\mathbb{R}^n$. 
            We conclude that 
            \[
        C_B=\frac{(\dims+1)\kappa_{\dims}}{2\kappa_{\dims-1}}.
        \]
Thus, $B$ satisfies equation \eqref{eqn:prob1} with this constant $C_B$.

Next, we will verify that \eqref{CentroidBody} is invariant under linear transformations of the body $K$. This will imply  that ellipsoids are also solutions of \eqref{eqn:prob1}.  For $T\in\gln$, we have
        \begin{align*}
        	h_{\Gamma TK}\left(\phi\right)&=\frac 1{\vol_n(TK)}\int_{TK}\abs{{\phi}\cdot{x}} dx\\
            	&=\frac {\abs{\det T^{-1}}}{\vol_n(K)}\int_{K}\abs{{\phi}\cdot{Tx}} \abs{\det T}dx&&x\rightarrow Tx\\
            	&=\frac 1{\vol_n(K)}\int_{K}\abs{(T^{*}\phi)\cdot x} dx\\
                &=h_{\Gamma K}(T^*\phi)=h_{T\Gamma K}(\phi).
        \end{align*}
Since linear transformations commute with the centroid body operator, if $K$ satisfies $K=C_K\Gamma K$, then so does 
$TK$, with the same constant $C_{TK}=C_K$.

\subsubsection{$c\calip K=\Gamma  K$}

Let $C_{K}'$ be the constant associated with a body $K$ that satisfies the condition of Theorem \ref{ci}.  If we evaluate \eqref{IStarReformulation} when $K$ is the Euclidean ball, we see
\begin{align*}
    (\calr1)&=C_B'(\dims-1)\left(\frac{\calc 1}{{(\dims+1)\kappa_\dims}}\right)^{-1},\\
    C_B'&=\left(\frac{\calr1}{\dims-1}\right) \left(\frac{\calc 1}{(\dims+1)\kappa_\dims}\right). 
\end{align*}
To show that all ellipsoids are fixed points, we need to check how linear transformations interact with \eqref{ci}.  Using the calculations found in Gardner's book \cite[pgs 21 \& 308]{Gar}, the equation below follows.
    \begin{align*}
        \Gamma  TK&=T\Gamma  K=TC_K'(\cali K)^*=C_K'(T^{-*}\cali K)^*\\&=C_K'(\abs{\det(T)}^{-1}\cali TK)^*=C_K'\abs{\det T}\cali^* TK. 
    \end{align*}
    Therefore if $K$ satisfies $c\Gamma K=\cali^* K$, so does $TK$, albeit for a different constant, $C_{TK}'=C_K'\abs{\det T}$. 

\newpage
    
\subsection{From the Banach-Mazur distance to  the Hausdorff distance}

\subsubsection{$c\Gamma K=K$}
     The Banach-Mazur distance (see \cite[pg 589]{Sch}) between two origin symmetric convex bodies $K$ and $L$, is defined as
    \[d_{BM}(K,L):=\min \left\{\frac{1+\lambda}{1-\lambda}\geq 1\setdiv (1-\lambda)K\subseteq TL\subseteq (1+\lambda )K, T\in\gln\right\}.\]

     Let $\epsilon>0$ and suppose that $d_{BM}(B,K)<\frac{1+\epsilon}{1-\epsilon}$. Taking the linear transformation $T$  where  the minimum is achieved yields 
    \[(1-\epsilon)B\subseteq TK\subseteq (1+\epsilon)B.\]
    Then, for  the radial and support functions of $TK$ we have
    \begin{equation}\label{CentroidConstantBM}
        1-\epsilon\leq \rho_{TK}\leq 1+\epsilon\hspace{40pt}1-\epsilon\leq h_{TK}\leq 1+\epsilon,
    \end{equation}
    and taking volumes on each side gives
    \begin{equation}\label{CentroidConstantBMVol}
        (1-\epsilon)^\dims\leq \frac{\vol_\dims(TK)}{\kappa_\dims}\leq(1+\epsilon)^\dims.
    \end{equation}
    While in similar problems (see, for example,  \cite{ANRY,FNRZ}), an appropriate dilation of the body can be chosen so that $C_K=C_B$, here we have that the constant in \eqref{eqn:prob1} is invariant under linear transformations, making this  choice impossible. Therefore, we will use equations \eqref{CentroidConstantBM} and \eqref{CentroidConstantBMVol} to obtain a bound on the constant $C_{TK}=C_K$ in terms of $C_B$ and $\epsilon$. 
    
    The cosine transform is a positive operator and as such it preserves inequalities.  Applying the cosine transform to all sides of the radial function estimate \eqref{CentroidConstantBM}, and adjusting the constants, we obtain 
\[
    \calc( 1-\epsilon)^{n+1} \leq 
     \calc\rho_{TK}^{\dims+1} \leq \calc (1+\epsilon)^{n+1},  
\]
\[
   \frac{C_B}{(n+1) \kappa_n} \calc( 1-\epsilon)^{n+1} \leq \frac{C_B}{(n+1) \kappa_n} 
     \calc\rho_{TK}^{\dims+1} \leq \frac{C_B}{(n+1) \kappa_n} \calc (1+\epsilon)^{n+1}. 
\]
Using equation \eqref{CentroidBodyReformulation}, we can rewrite all three terms in the inequality as 
\[
 (1-\epsilon)^{n+1} \leq \frac{C_B}{ \kappa_n}\vol_\dims(TK)
   \,  h_{\Gamma (TK)} \leq (1+\epsilon)^{n+1},
\]
    
    \[(1-\epsilon)^{\dims+1}\leq \frac{C_B\vol_\dims(TK)}{C_{K}\kappa_\dims}h_{C_{K}\Gamma TK}\leq (1+\epsilon)^{\dims+1}.\]
    
    Therefore, if $TK=C_K \Gamma (TK)$, we have
    \[(1-\epsilon)^{\dims+1}\leq \frac{C_B}{C_{K}}\frac{\vol_\dims(TK)}{\kappa_\dims}h_{TK}\leq (1+\epsilon)^{\dims+1}.\]
    Now we use the support function estimate in \eqref{CentroidConstantBM},  together with \eqref{CentroidConstantBMVol}, to obtain 
    \[\frac{(1-\epsilon)^{\dims+1}}{(1+\epsilon)^{\dims+1}}\leq \frac{C_B}{C_{K}}\leq \frac{(1+\epsilon)^{\dims+1}}{(1-\epsilon)^{\dims+1}}.\]
    Therefore,  $C_{K}=C_B+O(\epsilon)$.  

\bigskip

Among all linear transformations, we will choose one that places $K$ in the \textit{ isotropic position, i.e.}, the position where
\begin{equation}\label{isot} 
\int_K\abs{x\cdot y}^2dy=c\,|x|^2\qquad\qquad \forall x\in {\mathbb R^n}.
\end{equation}
It should be noted that the isotropic position as stated above is not unique as any dilate of $K$ must also satisfy \eqref{isot}.  We are interested in convex bodies close to the Euclidean ball and will accordingly select a dilate where $\vol_\dims(K)\approx \kappa_\dims$ per \eqref{CentroidConstantBMVol}.  See, for example  \cite[Sec. 2.3.2]{BGVV} for more details about the derivation of the isotropic position.   In  \cite[Sec. 5]{ANRY}, it is shown that if $\epsilon>0$ and $(1-\epsilon)B\subseteq K\subseteq (1+\epsilon)B$, then 
for the isotropic position $K'$ of $K$ with $\vol_\dims(K)=\vol_\dims(K')$, we have 
    \[(1-\epsilon)\left(\frac{1-\epsilon}{1+\epsilon}\right)^{\frac{\dims+2}{2}}B\subseteq K'\subset(1+\epsilon)\left(\frac{1+\epsilon}{1-\epsilon}\right)^{\frac{\dims+2}{2}}B,
    \]
and the above considerations guarantee that the constant still satisfies $C_{K'}=C_B+O(\epsilon)$.

\subsubsection{$c\calip K=\Gamma K$} Starting with \eqref{CentroidConstantBM}, we perform  similar estimates for the cosine and Radon transform, obtaining 
\[
    \frac{1}{(1+\epsilon)^{\dims+1}}\leq \left(\frac{\calc\rho_K^{\dims+1}}{(\dims+1)\vol_n(K)}\right)^{-1}\left(\frac{\calc 1}{(\dims+1)\vol_\dims(K)}\right)
    \leq\frac{1}{(1-\epsilon)^{\dims+1}},
\]
\[
    (1-\epsilon)^{\dims-1}\leq\left(\frac{\calr\rho_K^{\dims-1}}{\dims-1}\right)\left(\frac{\dims-1}{\calr1}\right)\leq(1+\epsilon)^{\dims-1}.
\]
Taking the ratio of the two terms above, multiplying by \eqref{CentroidConstantBMVol} and using equation \eqref{IStarReformulation}  yields
\[
    \frac{(1-\epsilon)^{\dims}}{(1+\epsilon)^{\dims+1}}\leq\frac{C_B'}{C_K'}\leq\frac{(1+\epsilon)^{\dims} }{(1-\epsilon)^{\dims+1}}.
\]
Hence, the ratio of the constants is of the order $1+O(\epsilon)$.  Incorporating the isotropic position we still have $C_K'=C_B'+O(\epsilon)$.

\section{Spherical harmonics}
    Spherical harmonics are homogeneous harmonic polynomials restricted to the sphere.  They form the eigenspaces for a collection of common linear operators, such as the spherical Radon transform, the cosine transform and the Laplace-Baltrami operator on the sphere.  In this section, we will mostly follow  the expositions from  Groemer \cite{Gro} and Atkison-Han \cite{AH}.

    The space of all harmonic, homogeneous polynomials of degree $\stdindex$ on $\dims$ variables whose domain is restricted to the sphere is denoted $\calhnd$.  Its elements are called the spherical harmonics of degree $\stdindex$.  The spherical harmonic spaces are orthogonal with respect to the $L^2(\calsn)$ inner product. The space of all finite sums of spherical harmonics on $\dims$ variables is denoted by $\calhn=\oplus_{\stdindex=0}^\infty \calhnd$, and called the space of spherical harmonics.

    The space of spherical harmonics is dense in $L^2(\calsn)$ and by extension also in $C(\calsn)$.  The orthogonal projection onto the spherical harmonics of degree $\stdindex$, will be denoted $\calp[\stdindex]:C(\calsn)\rightarrow\calhnd$ (see \cite[Def. 2.11]{AH} for  an explicit definition of the projection).  The eigenvalues of the cosine transform on each of the spherical harmonic spaces are (see \cite[Lemmas 3.4.1, 3.4.5)]{Gro}), for $k=0$, $\calc\calp[0]=2\kappa_{\dims-1}\calp[0]$ and for even $\stdindex>0$
    \begin{align*}
        \calc\calp[\stdindex]&=(-1)^{(\stdindex-2)/2}2\kappa_{\dims-1}\frac{(\stdindex-3)!!(\dims-1)!!}{(\stdindex+\dims-1)!!}\calp[\stdindex]=\mu_{\dims,\stdindex}\calp[\stdindex].
    \end{align*}
    where $(\stdindex+2)!!=(\stdindex+2)(\stdindex!!)$.
     Since  the cosine transform is defined as an integral over a symmetric domain, $\calc\calp[\stdindex]=0$ when $\stdindex$ is odd.

When the body $K$ is placed in the isotropic position, we have the following estimate of the second harmonic of $\rho_K$, obtained in \cite[Sec. 9]{ANRY}. Let $r_0=\calp[0]\rho_K=\int_{\calsn} \rho_K d\sigma$ be the mean value of $\rho_K$ on the sphere, and let
$\rho_K-r_{\scriptscriptstyle{0}}=\calp[2](\rho_K)+\calp[4](\rho_K)+\dots$ be the spherical harmonic decomposition of 
$\rho_K-r_{\scriptscriptstyle{0}}$. From the definition \eqref{isot} of the isotropic position, we have that if $p(x)$ is a quadratic harmonic polynomial, $p(x)=\sum\limits_{i,j}a_{ij}x_ix_j$ with $\sum\limits_{i=1}^na_{ii}=0$, then 
\[
0=\int\limits_Kp(x)dx=c_n\int\limits_{\calsn}\rho_K^{n+2}(x)p(x)d\sigma(x). 
\]
This means $\rho_K^{n+2}$ has no second order term in its spherical harmonic decomposition.  If $K$, additionally, was also close to the Euclidean ball in the sense $\abs{\rho_K-r_0}<\epsilon$, then $\calp[2]\rho_K$ should also be small in the $L^2(\calsn)$ sense.  Indeed, estimating the error of the first order Taylor polynomial of $(x+r_0)^{\dims+2}$ at $\rho_K-r_0$, yields 
\[
|\rho_K^{n+2}-(
r_{\scriptscriptstyle{0}}^{n+2}+(n+2)r_{\scriptscriptstyle{0}}^{n+1}(\rho_K-r_{\scriptscriptstyle{0}}))|\le C\epsilon |\rho_K-r_{\scriptscriptstyle{0}}|.
\]
The second harmonic $\calp[2](\rho_K)$ of $\rho_K$ can now be estimated from $\rho_K^{n+2}$ using the previous equation.  Applying the $L^2(\calsn)$ to each side and using the orthogonality of the spherical harmonic spaces, we obtain 
 \[
(n+2)r_{\scriptscriptstyle{0}}^{n+1}\|\calp[2](\rho_K)\|_{L^2(S^{n-1})}\le C\epsilon \|\rho_K-r_{\scriptscriptstyle{0}}\|_{L^2(S^{n-1})},
\]
which yields that 
\begin{equation}\label{p2r}    
\|\calp[2](\rho_K)\|_{L^2(S^{n-1})}\le C'\epsilon \|\rho_K-r_{\scriptscriptstyle{0}}\|_{L^2(S^{n-1})}.
\end{equation}

\bigskip
     
        \subsection{Linear approximation of $\calc\rho_K^{\dims+1}$}\label{lincos}
        
        In \cite[Sec. 9]{ANRY},  an estimate on the linear component of the operator $(\calr [\rho_K]^\alpha)^\beta$ is obtained. 
        In this subsection we derive a similar estimate for the operator $[\calc\rho_K^\alpha]^\beta$ that appears in our problem.  

        Let $c,\gamma\in \bbr\setminus\{0\}$, we know that the first order Taylor polynomial of $x^\gamma$ at $c$ is
       \begin{equation}\label{TaylorExpansion}
           x^\gamma\approx c^\gamma+\gamma c^{\gamma-1}(x-c).
       \end{equation}
       Take $r_0=\calp[0]\rho_K$ be the constant $c$, $\gamma=\alpha$, and $\rho_K$ be $x$.  We can then estimate the error of the first order approximation of $\rho_K^\alpha$ when  $1-\epsilon<\rho_K<1+\epsilon$ and $\epsilon>0$ is small.  In particular, we see the Lipschitz constant $C_\alpha$ appear in the error bound, 
            \[
            \abs{\rho_K^\alpha-(r_0^\alpha+\alpha r_0^{\alpha-1}(\rho_K-r_0))}\leq C_\alpha \epsilon\abs{\rho_K-r_0}.
            \]
		Observe that for any function $f\geq 0$, $\calc f\geq0$.  That is $\calc$ is a positive operator in the vector space sense.  Using the fact that $\calc$ is a positive linear operator,
            \[
            \calc\abs{\rho_K^\alpha-(r_0^\alpha+\alpha r_0^{\alpha-1}(\rho_K-r_0))}\leq C_\alpha \epsilon\,\calc\abs{\rho_K-r_0}.
            \]
        As positive linear operators admit a triangle inequality $\abs{\calc f}\leq\calc\abs{f}$, we get
            \[
            \abs{\calc\rho_K^\alpha-(\calc r_0^\alpha+\alpha r_0^{\alpha-1}\calc(\rho_K-r_0))}\leq C_\alpha \epsilon  \,\calc\abs{\rho_K-r_0}.
            \]
		Finally, let us examine the first order Taylor polynomial \eqref{TaylorExpansion} again but with $x=\calc\rho_K^\alpha$, $c=\calc r_0^\alpha$, and $\gamma=\beta$.
        \begin{align*}
            (\calc\rho_K^\alpha)^\beta&\approx (\calc r_0^\alpha)^\beta+\beta(\calc r_0^\alpha)^{\beta-1}\left[\calc (\rho_K^\alpha-r_0^\alpha)\right]\\
            &\approx (\calc r_0^\alpha)^\beta+\beta(\calc r_0^\alpha)^{\beta-1}\left[\alpha r_0^{\alpha-1}\calc (\rho_K-r_0)\right]
        \end{align*}
        Estimating the error of the approximation, we then see
            \begin{equation}\label{LinearizationError}
                \abs{(\calc\rho_K^\alpha)^\beta-(r_0^{\alpha\beta}(\calc 1)^{\beta}+\alpha\beta (\calc 1)^{\beta-1}r_0^{\alpha\beta-1}\calc(\rho_K-r_0))}\leq C_{\alpha,\beta} \, \epsilon \, \calc\abs{\rho_K-r_0},
            \end{equation}
        where the Lipschitz constant $C_{\alpha,\beta}$ is dependent on the value of $\calc 1$, $\alpha$, $\beta$, and the maximum value of $\epsilon$ allowed.  Notably, this is bounded   Note that in the estimate of \eqref{LinearizationError} only the positivity of the linear operator $\calc$ was used.  Considering some other positive linear operator $\calm$ (positive in the $\calm f\geq 0$ when $f\geq 0$ sense), the particular constant $C_{\alpha,\beta}$ will change but an identical estimate holds for $[\calm \rho_K^\alpha]^\beta$.  
      
        The next lemma requires a definition.  Let $\calm$ be a bounded linear operator on $L^2(\calsn)$ whose eigenspaces are precisely $\calhnd$, that is, there exist eigenvalues $\mu_\stdindex$ such that for all $f\in L^2$,
        \[\calm f=\sum_{\stdindex\geq 0}\mu_\stdindex\calp[\stdindex] f.\]
        Then $\calm$ is said to be a strong contraction if
        \[\max_{\stdindex\geq 0}\abs{\mu_\stdindex}<1,\hspace{20pt}\text{ and }\hspace{20pt}\lim_{\stdindex\rightarrow \inf}\mu_\stdindex=0.\]
        It should be noted that small enough constant multiples of the cosine and Radon transforms are strong contractions, for example. The main result about the strong contraction is  \cite[Lemma 4]{ANRY}, stated below. 

        \begin{lemma}{4}\textit{  Assume $\calm$ is a strong contraction.  Then there exists $\epsilon\in(0,1)$ such that for any symmetric convex body $K$ and any $c\in (1-\epsilon,1+\epsilon)$, the conditions
        \[1-\epsilon\leq\rho_K\leq1+\epsilon,\hspace{20pt}\norm[L^2]{(h_K-h_0)-c\calm(\rho_K-r_0)}\leq\epsilon\norm[L^2]{\rho_K-r_0},\]
        imply $h_K=\rho_K=$const.  Here $h_0$ and $r_0$ are the constant terms of the spherical harmonic decomposition.}
        \end{lemma}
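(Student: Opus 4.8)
The plan is to pass to spherical harmonics, use the spectral gap of the strong contraction $\calm$ to force $h_K-h_0$ to be strictly smaller than $\rho_K-r_0$ in $L^2$, and then close the loop with the geometric fact that the support and radial functions of a convex body near the ball have the same linearization. Write $f:=\rho_K-r_0$ and $g:=h_K-h_0$; by origin symmetry only even spherical harmonics occur, so $f,g\in\bigoplus_{k\geq 2}\calhnd$ and $\calp[0]f=\calp[0]g=0$. Putting $e:=g-c\,\calm f$, the second hypothesis reads $\norm[L^2]{e}\leq\epsilon\norm[L^2]{f}$, and comparing degree-$k$ components gives $\calp[k]g=c\mu_k\,\calp[k]f+\calp[k]e$ for all $k$.

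First I would use the contraction. Since $M:=\sup_k\abs{\mu_k}<1$, for $\epsilon$ small we have $\abs{1-c\mu_k}\geq\tau:=1-(1+\epsilon)M>0$ for every $k$ and every $c\in(1-\epsilon,1+\epsilon)$. From $(c\mu_k-1)\calp[k]f=\calp[k](g-f)-\calp[k]e$ together with $\abs{c\mu_k-1}\geq\tau$, squaring and summing over $k\geq 2$ (using that $g-f$ and $e$ also have vanishing $\calp[0]$, and, being even, vanishing $\calp[1]$) yields
\[
\norm[L^2]{f}\ \leq\ \frac{1}{\tau}\norm[L^2]{g-f}+\frac{1}{\tau}\norm[L^2]{e}\ \leq\ \frac{1}{\tau}\norm[L^2]{g-f}+\frac{\epsilon}{\tau}\norm[L^2]{f},
\]
so $\norm[L^2]{f}\leq(\tau-\epsilon)^{-1}\norm[L^2]{g-f}$ once $\epsilon<\tau$.

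Next I would bring in convexity. At every $\theta\in\calsn$ the boundary point $\rho_K(\theta)\theta$ gives $\rho_K(\theta)=\langle\rho_K(\theta)\theta,\theta\rangle\leq h_K(\theta)$, so $d:=h_K-\rho_K\geq 0$; since $(1-\epsilon)B\subseteq K\subseteq(1+\epsilon)B$ we also have $\norm[\infty]{d}\leq 2\epsilon$, and the mean of $d$ is $h_0-r_0$. Hence $g-f=d-(h_0-r_0)$ and
\[
\norm[L^2]{g-f}\ \leq\ \norm[L^2]{d}\ \leq\ \norm[\infty]{d}^{1/2}\,\norm[L^1]{d}^{1/2}\ \leq\ (2\epsilon)^{1/2}\Bigl(\int_{\calsn}\bigl(h_K-\rho_K\bigr)\,d\sigma_{\dims-1}\Bigr)^{1/2}.
\]
The crux is the estimate $\int_{\calsn}(h_K-\rho_K)\,d\sigma_{\dims-1}\leq c_\dims\,\norm[L^2]{\rho_K-r_0}^{2}$, or a suitable refinement with the same effect: granting it, the previous display reads $\norm[L^2]{g-f}\leq(2\epsilon c_\dims)^{1/2}\norm[L^2]{f}$, and combining with the contraction step gives $\norm[L^2]{f}\leq(\tau-\epsilon)^{-1}(2\epsilon c_\dims)^{1/2}\norm[L^2]{f}$. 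Choosing $\epsilon$ small enough makes the prefactor $<1$, forcing $f\equiv 0$; then $\rho_K\equiv r_0$, $K$ is a Euclidean ball, and $h_K=\rho_K=r_0$.

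The main obstacle is the quadratic control of $\int_{\calsn}(h_K-\rho_K)\,d\sigma_{\dims-1}$: this is exactly the place where convexity is used essentially, not merely the two-sided inclusion, and where the argument is sensitive to constants. I would attack it through the contact identity $h_K(\theta)-\rho_K(\theta)=\bigl(\rho_K(u(\theta))-\rho_K(\theta)\bigr)-\tfrac12\rho_K(u(\theta))\,\abs{u(\theta)-\theta}^{2}$, where $u(\theta)\in\calsn$ is the direction with $\rho_K(u(\theta))u(\theta)\in\partial K$ carrying outer normal $\theta$; nonnegativity of the left side forces $\abs{u(\theta)-\theta}=O(\epsilon^{1/2})$, so $\theta\mapsto u(\theta)$ is uniformly close to the identity on $\calsn$, and the cancellation $\int_{\calsn}(\rho_K-r_0)\,d\sigma_{\dims-1}=0$ after the substitution $\theta\mapsto u(\theta)$ should upgrade the obvious linear bound to the quadratic one. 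Making this rigorous uniformly over all symmetric convex bodies near $B$, with no smoothness hypothesis, is the delicate step; everything else in the proof is soft.
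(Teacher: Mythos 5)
Your spectral step is fine, but the geometric half of your plan contains a genuine gap, and it is a missing idea rather than a fixable technicality. (Note also that the paper does not prove this lemma at all: it quotes it from \cite{ANRY}, so the comparison is with the proof given there.) The inequality you single out as the crux, $\int_{\calsn}(h_K-\rho_K)\,d\sigma_{\dims-1}\leq c_\dims\norm[L^2]{\rho_K-r_0}^2$, is false for non-smooth bodies arbitrarily close to the ball. Take $K_\delta=B\cap\{\abs{x\cdot e_1}\leq 1-\delta\}$ and $\alpha_0=\arccos(1-\delta)\approx\sqrt{2\delta}$. For directions at angle $\alpha\leq\alpha_0$ from $\pm e_1$ one has $\rho_{K_\delta}=(1-\delta)/\cos\alpha$ and $h_{K_\delta}=\cos(\alpha_0-\alpha)$, so $h_{K_\delta}-\rho_{K_\delta}\approx\alpha(\alpha_0-\alpha)$ and $r_0-\rho_{K_\delta}\approx(\alpha_0^2-\alpha^2)/2$, while $h_{K_\delta}=\rho_{K_\delta}$ off the two caps. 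Hence $\int_{\calsn}(h_{K_\delta}-\rho_{K_\delta})\,d\sigma\asymp\delta^{(\dims+1)/2}$ whereas $\norm[L^2]{\rho_{K_\delta}-r_0}^2\asymp\delta^{(\dims+3)/2}$: the ratio blows up like $1/\delta$. Worse, the same computation gives $\norm[L^2]{h_{K_\delta}-\rho_{K_\delta}}\geq c_\dims\norm[L^2]{\rho_{K_\delta}-r_0}$ with $c_\dims>0$ independent of $\delta$ (about $0.6$ for $\dims=3$), and the mean of the deficit is of lower order, so $\norm[L^2]{(h_{K_\delta}-h_0)-(\rho_{K_\delta}-r_0)}$ stays bounded below by a fixed multiple of $\norm[L^2]{\rho_{K_\delta}-r_0}$ no matter how small $\delta$ is. Since your second half never uses the spectral hypothesis, it is exactly this purely geometric claim, asserted for all near-ball symmetric convex bodies, that you need, and it fails: no bound of the form $\norm[L^2]{(h_K-h_0)-(\rho_K-r_0)}\leq o_\epsilon(1)\norm[L^2]{\rho_K-r_0}$ can hold. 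Your contraction constant $\tau=1-(1+\epsilon)\sup_k\abs{\mu_k}$ may be arbitrarily small under the lemma's hypotheses, so a comparison constant of order one can never beat it.

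A structural symptom of the same problem is that you use only half of the strong-contraction hypothesis: the decay $\mu_k\to 0$ never enters your argument. In the source proof in \cite{ANRY} the deficit $d=h_K-\rho_K$ is exploited through its sign and its small mean (for instance via $\norm[L^2]{d}^2\leq\norm[L^\infty]{d}\,\norm[L^1]{d}$), in combination with a frequency splitting: at low degrees one uses $\abs{1-c\mu_k}\geq\tau$ as you do, while at high degrees the smallness of $\abs{\mu_k}$ forces the high-frequency parts of $d$ and of $\rho_K-r_0$ to nearly cancel, and it is this interplay that closes the argument with no smoothness assumption. Your contact-identity strategy, by contrast, is precisely where edges and flat pieces (as in $K_\delta$, where the contact map $u(\theta)$ collapses whole caps onto the rim) destroy any hoped-for quadratic control of $h_K-\rho_K$ by $\rho_K-r_0$. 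So the proposal cannot be repaired by sharpening constants; it needs the cancellation mechanism at high frequencies (or some equivalent use of $\mu_k\to 0$ and of the nonnegativity and small mean of $h_K-\rho_K$) that it currently lacks.
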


\section{Proof of Theorem 1}
   
    \begin{theorem}{1} \textit{There exists an $\epsilon_2>0$, such that the only convex bodies $K$ satisfying \eqref{eqn:prob1} with $d_{BM}(B,K)<\epsilon_1$ are ellipsoids.}
    \end{theorem}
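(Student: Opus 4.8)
The plan is to linearize equation~\eqref{CentroidBody} about the Euclidean ball and then invoke Lemma~4, after repairing the linearized operator at the two spherical harmonic degrees where it is not a strong contraction. First I would use the reductions of the previous section: after a linear transformation and a dilation I may assume $K$ is in isotropic position with $(1-\delta)B\subseteq K\subseteq(1+\delta)B$, where $\delta\to 0$ as $d_{BM}(B,K)\to 1$, and moreover $\vol_\dims(K)=\kappa_\dims(1+O(\delta))$ and $C_K=C_B+O(\delta)$. Since ellipsoids satisfy \eqref{eqn:prob1} and the equation is affine invariant, it is enough to show that this normalized body equals $B$. Write $r_0=\calp[0]\rho_K$ and $h_0=\calp[0]h_K$; by the Remark, $\rho_K$ and $h_K$ are even, so all odd harmonic components vanish.

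Next I would feed \eqref{CentroidBody} into the linearization estimate \eqref{LinearizationError} with $\alpha=\dims+1$, $\beta=1$ and $\calm=\calc$. Since $\calc$ is bounded on $L^2(\calsn)$ (its eigenvalues are bounded by $2\kappa_{\dims-1}$) and preserves the spherical harmonic grading, taking $L^2(\calsn)$ norms and subtracting off the degree-$0$ parts gives
\[
h_K-h_0=\tilde c\,\calc(\rho_K-r_0)+E,\qquad \tilde c:=\frac{C_K\,r_0^{\,\dims}}{\vol_\dims(K)}=\frac{\dims+1}{2\kappa_{\dims-1}}+O(\delta),\qquad \norm[L^2]{E}\le C\delta\,\norm[L^2]{\rho_K-r_0},
\]
with $\calp[0]E=0$ and $C=C(\dims)$. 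Using the eigenvalues of $\calc$ recalled above, the $\stdindex$-th eigenvalue $\tilde c\,\mu_{\dims,\stdindex}$ of $\tilde c\,\calc$ equals $\dims+1+O(\delta)$ for $\stdindex=0$, equals $1+O(\delta)$ for $\stdindex=2$, vanishes for odd $\stdindex$, and for even $\stdindex\ge 4$ satisfies $|\tilde c\,\mu_{\dims,\stdindex}|\le\frac{1}{\dims+3}+O(\delta)<1$ and tends to $0$. Thus $\tilde c\,\calc$ is \emph{almost} a strong contraction, failing only at $\stdindex=0$ and $\stdindex=2$.

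I would then bypass those two levels. Let $\calm'$ be the operator agreeing with $\tilde c\,\calc$ on every $\calhnd$ with $\stdindex\notin\{0,2\}$ and equal to $0$ on $\calhnd[0]$ and $\calhnd[2]$; this $\calm'$ is a genuine strong contraction. Since $\rho_K-r_0$ has no degree-$0$ component, replacing $\tilde c\,\calc$ by $\calm'$ changes the right-hand side only by $(\tilde c\,\mu_{\dims,2})\calp[2](\rho_K-r_0)=(1+O(\delta))\calp[2]\rho_K$; and here the isotropic normalization enters, through estimate \eqref{p2r}, as $\norm[L^2]{\calp[2]\rho_K}\le C'\delta\,\norm[L^2]{\rho_K-r_0}$. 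Hence
\[
\norm[L^2]{(h_K-h_0)-\calm'(\rho_K-r_0)}\le(1+O(\delta))\,\norm[L^2]{\calp[2]\rho_K}+\norm[L^2]{E}\le C''\delta\,\norm[L^2]{\rho_K-r_0}
\]
with $C''=C''(\dims)$. Fixing the threshold $\epsilon$ that Lemma~4 furnishes for the strong contraction $\calm'$, and then taking $d_{BM}(B,K)$ close enough to $1$ that both $\delta$ and $C''\delta$ lie below it, every hypothesis of Lemma~4 holds (with its constant $c$ equal to $1$), so $h_K=\rho_K=\text{const}$. Thus the normalized body equals $B$, so the original $K$ is an ellipsoid, and tracing the reduction back supplies the threshold $\epsilon_1$.

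The main obstacle is the degree-$2$ level: the linearized centroid operator has eigenvalue exactly $1$ there — necessarily, since every ellipsoid solves \eqref{eqn:prob1} — so Lemma~4 cannot be applied to the true linearization. The argument succeeds only because the isotropic position forces the quadratic harmonic of $\rho_K$ to be of higher order, estimate \eqref{p2r}, so that this level can be absorbed into the error $E$. A secondary point, already dealt with in the previous section, is that the constant $C_K$ in \eqref{eqn:prob1} is affine invariant, so — unlike in \cite{FNRZ,ANRY} — one cannot rescale to force $C_K=C_B$ exactly and must instead carry $C_K=C_B+O(\delta)$ through the linearization.
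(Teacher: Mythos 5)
Your proposal is correct and follows essentially the same route as the paper: linearize \eqref{CentroidBody} via \eqref{LinearizationError}, discard the degree-$0$ and degree-$2$ harmonic levels (the latter controlled precisely by the isotropic-position estimate \eqref{p2r}), and apply Lemma 4 to the truncated, normalized cosine transform, whose first surviving eigenvalue is $-1/(\dims+3)$. The only cosmetic difference is that the paper keeps the operator $\calm=C_B\kappa_\dims^{-1}(\calc-\calc\calp[0]-\calc\calp[2])$ fixed and places the $K$-dependent factor $r_0^{\dims}C_K/C_B=1+O(\epsilon)$ into the constant $c$ of Lemma 4, rather than absorbing it into the contraction as you do, which keeps the threshold furnished by Lemma 4 independent of $K$.
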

    \begin{proof}
        This will be proven in two steps, first under certain assumptions on $K$ and then in general. First, we will assume that $K$ satisfies \eqref{eqn:prob1}, is close to the sphere (in the sense that  $1-\epsilon\leq\rho_K\leq1+\epsilon$), and is placed in the isotropic position with $\vol_\dims(K)=\kappa_\dims$. 
        
         Using the same approach as in the papers \cite{ANRY,FNRZ,SZ}, we will separate the operator into linear and non-linear components and examine each separately. 
         With our assumptions on $K$, equation \eqref{CentroidBody} reads as 
        \[
            h_K=\frac {C_K}{(\dims+1)\kappa_{\dims}}\calc\rho_K^{\dims+1}.
        \]
        As we saw in Section 3, we write $\rho_K=r_0+\gamma$, where $r_0$ is the zero harmonic of $\rho_K$. Since  $\calc$ is a linear operator and constants are eigenvectors of it, we have
        \begin{equation}\label{Problem1Linearization}
            h_K=\frac {C_K}{(\dims+1)\kappa_{\dims}}\calc(r_0+\gamma)^{\dims+1}=\frac{C_K}{C_B}r_0^{\dims+1}+\frac {C_K r_0^{\dims}}{\kappa_{\dims}}\calc \gamma +G_1,
        \end{equation}
        where $G_1$ contains all  the higher order terms.  Thus, the linear component may be rewritten as,
        \begin{equation}\label{linest}
                    r_0^{\dims}\frac{C_K}{\kappa_{\dims}}\calc \gamma=\left(r_0^{\dims}\frac{C_K}{C_B}\right)\left(\frac{C_B}{\kappa_\dims}\calc\right)\gamma.
       \end{equation}
        Estimating the higher order terms, we solve for $G_1$ in \eqref{Problem1Linearization} and take $L^2$ norms yielding
         \begin{align*}
            \norm[L^2]{G_1} = \norm[L^2]{\left[h_{K}-r_0^{\dims+1}\frac{C_K}{C_B}\right]-\left(r_0^{\dims}\frac{C_K}{C_B}\right)\left(\frac{C_B}{\kappa_n}\calc\right)\gamma}.
            \end{align*}
             Using \eqref{LinearizationError} with $\alpha=n+1,\beta=1$ we obtain 
        \begin{align*}
            \norm[L^2]{G_1}&\leq C_{\dims+1,1}\frac{C_K}{C_B}\epsilon\norm[L^2]{\frac{C_B}{(\dims+1)\kappa_\dims}\calc\abs{\gamma}}\\
            &\leq C_{\dims+1,1}\frac{C_K}{C_B}\epsilon\norm[L^2]{\gamma}.
        \end{align*}
        In the notation of Lemma 4, we have $h_0=r_0^{\dims+1}C_KC_B^{-1}$, $c=r_0^{\dims}C_KC_B^{-1}$, and $\calm=C_B\kappa_\dims^{-1}(\calc-\calc\calp[0]-\calc\calp[2])$. 
        The  degree zero harmonic of $\rho_K-r_0$ is zero, and the second harmonic of $\gamma$ has already been estimated by \eqref{p2r}. The eigenvalues of operator $\calm$ are scalar multiples of the eigenvalues of the cosine transform starting with degree 4.  Since the even degree eigenvalues decay  monotonically to zero in absolute value, we only need to check that the first non-zero eigenvalue is less than one in absolute value.  Indeed, $\calm\calp[4]=-(\dims+3)^{-1}\calp[4]$, and  $\calm$ is a strong contraction as desired.
        By lemma 4, there is some $0<\epsilon_0$ where if $K$ satisfies,
        \begin{itemize} \item $1-\epsilon_0\leq \rho_K\leq 1+\epsilon_0$,
            \item $c\in (1-\epsilon_0,1+\epsilon_0)$,
            \item $\norm[L^2]{(h_K-h_0)-c\calm(\rho_K-r_0)}\leq\epsilon_0\norm[L^2]{\rho_K-r_0}$,
        \end{itemize}
        $K$ must be the Euclidean ball.
        
        First, we note that the constant $c$ is bounded by the product of the bounds on $r_0^{\dims}$ and the bounds on the ratio $C_B/C_K$, giving $c=1+O(\epsilon)$.  If $\epsilon$ is sufficiently small $1-\epsilon_0<c<1+\epsilon_0$ follows.  The second criteria follows if $\epsilon<\epsilon_0$.  Checking the third criteria, 
            \[\norm[L^2]{\left[h _K-h_0\right]-c\calm\left(\rho_K-r_0\right)}\]
            \[\leq \norm[L^2]{\left[h_K-h_0\right]-c\left(\frac{C_B}{\kappa_\dims}\calc\right)\gamma}+c\norm[L^2]{\frac{C_B}{\kappa_\dims}\calc\calp[2]\gamma}\]
                \[\leq \bigg(C_{\dims+1,1}\frac{C_K}{C_B}+c\frac{C_{\dims+2}}{r_0^{\dims+1}}\bigg)\epsilon\norm[L^2]{\gamma}.\]
        Now we need to estimate the expression in parentheses.  We have that
        \[C_{\dims+1,1}\frac{C_K}{C_B}+c\frac{C_{\dims+2}}{r_0^{\dims+1}}\leq (\dims+1)\frac{(1+\epsilon)^{2\dims+1}}{(1-\epsilon)^{\dims+1}}+(1+\epsilon_0)(\dims+2)\frac{(1+\epsilon)^{\dims+1}}{(1-\epsilon)^{\dims+1}}\]
        is bounded above. Hence, the product of this bound with $\epsilon$ is bounded above by $\epsilon_0$, provided $\epsilon$ is small enough.  
        
        Now we prove Theorem \ref{c1} for all bodies close to the ball.  Let $0<\epsilon_1$ be the maximal $\epsilon$ for which the additional criteria for Lemma 4 hold in the isotropic case.  Put
        \[\epsilon_2=\max\left\{\epsilon>0\biggsetdiv \abs{1-(1\pm\epsilon)\left(\frac{1\pm \epsilon}{1\mp \epsilon}\right)^{\frac{\dims+2}{2}}}\leq \epsilon_1\right\}.\]
        If $K$ is a convex body with $d_{BM}(B,K)\leq\epsilon_2$ and satisfying \eqref{eqn:prob1}, then there exists a linear transformation $T\in\gln$ where $TK$ satisfies the assumptions of the first part of the proof.  Therefore, $TK$ is a dilate of the Euclidean ball and $K$ is an ellipsoid.
    \end{proof}


\section{Generalization to $p$-centroid bodies: Proof of Theorem \ref{cp}}\label{five}
    This section introduces $p$-centroid body and extends the result of the previous section to $p>1$.  Subsection \ref{cpdefinitionsection} gives the definition of the $p$-centroid body and an analytic reformulation in terms of the $p$-cosine transform.  As the differences between the prior case and the $p>1$ are purely technical, the subsequent subsections include relevant details for the extension of Theorem \ref{c1} to $p>1$ for completeness. 
    \subsection{Definition}\label{cpdefinitionsection}
    The $p$-centroid body of a star body, $\Gamma_p K$ is defined in a  similar way to the centroid body by,
    \[h_{\Gamma_p K}(\theta):=\left(\mint{-}_K\abs{\theta\cdot x}^{p}dx\right)^{\frac1p}=\frac1{\sqrt[p]{(\dims+p)\vol_{\dims}(K)}}\left(\calc^p\rho_K^{\dims+p}(\theta)\right)^{\frac1p},\hspace{20pt}\forall\theta\in\calsn
    \]
    where $\mint{-}$ denotes the integral average $\mint{-}_K\,dx=\frac1{\vol_\dims(K)}\int_K\,dx$ and $\calc^p$ is the $p$-cosine transform, 
    \[
      \calc^p(f):=\int_{\calsn}\abs{\phi\cdot\theta}^p f(\theta)\intdiv d\sigma_{\dims-1}(\theta).
    \]
     The $p$-centroid bodies were introduced in \cite{LYZ}. For $p\geq 1$, the function $h_{\Gamma_p K}$ is the support function of a convex body, so $\Gamma_p K$ is well defined.  In particular for $p=1$, this is the centroid body studied in the previous sections, and for $p=2$ this is the so called Legendre ellipsoid.  For $p=2$, the relation $C_K \Gamma_2 K=K$ trivially implies  that $K$ is an ellipsoid. 
     
     For $p>1$, $p\neq 2$, we will study if 
    \[C_K\Gamma_p K=K\]
    for some scalar $C_K$ implies that $K$ is an ellipsoid, when $K$ is close enough to the Euclidean ball in the Banach-Mazur distance. The analytic reformulation of the above equation is 
\begin{equation}\label{argp}
    \frac{C_K}{\sqrt[p]{(\dims+p)\vol_{\dims}(K)}}\left(\calc^p\rho_K^{\dims+p}(\theta)\right)^{\frac1p}=h_K(\theta).
\end{equation}
     

    \subsection{Linear transformations and Stability}
     As in the $p=1$ case, equation \eqref{argp} is invariant under linear transformations.  If $T\in\gln$, then
    \[h_{\Gamma_p TK}(\theta)=\left(\mint{-}_{TK}\abs{\theta\cdot x}^p dx\right)^{\frac1p}
        =\left(\mint{-}_{K}\abs{T^*\theta\cdot x}^p dx\right)^{\frac1p}=h_{T\Gamma_p K}(\theta).\]
    Because the $p$-centroid body operator commutes with linear transformations, if $K$ satisfies \eqref{argp} then so does $TK$,  and $C_{TK}=C_K$.  Since the unit Euclidean ball $B$ satisfies  \eqref{argp}, we have that ellipsoids also satisfy this equation with the constant 
    \[1=\frac{C_B}{\sqrt[p]{(\dims+p)\kappa_\dims}}(\calc^p1)^{\frac1p},\hspace{20pt}\text{or equivalently,}\hspace{20pt}C_B=\frac{\sqrt[p]{(\dims+p)\kappa_\dims}}{(\calc^p 1)^{1/p}}.\]
    Since $C_{K}$ in  \eqref{argp} is invariant under linear transformations of the body, we cannot choose a particular value for this constant by picking an appropriate  linear transformation. On the other hand, we can obtain stability bounds on $C_K$ in same way as in the case $p=1$.  Let $K$ be close to the Euclidean ball, that is for some $\epsilon>0$,
    \begin{equation}\label{close}
        1-\epsilon\leq \rho_{K}\leq 1+\epsilon, \hspace{40pt}1-\epsilon\leq h_{K}\leq 1+\epsilon.
    \end{equation}
     This implies 
    \begin{equation}\label{volu}\frac1{(1+\epsilon)^{\frac{\dims}{p}}}\leq\sqrt[p]{\frac{\kappa_\dims}{\vol_\dims(K)}}\leq \frac1{(1-\epsilon)^{\frac{\dims}{p}}}.
    \end{equation}
    Applying the $p$-cosine transform to the estimate on the radial function in \eqref{close}, we get 
    \[\frac{C_B}{\sqrt[p]{(\dims+p)\kappa_\dims}}\left(\calc^p(1-\epsilon)^{\dims+p}\right)^{\frac1p}
    \leq \frac{C_B}{\sqrt[p]{(\dims+p)\kappa_\dims}}\left(\calc^p\rho_K^{\dims+p}\right)^{\frac1p}\]
    \[
    \leq \frac{C_B}{\sqrt[p]{(\dims+p)\kappa_\dims}}\left(\calc^p(1+\epsilon)^{\dims+p}\right)^{\frac1p}.\]
   Combining this estimate with \eqref{volu} and with equation \eqref{argp} we have    
    \[(1-\epsilon)^{\frac{\dims+p}p}
    \leq \frac{C_B}{C_K}\sqrt[p]{\frac{\vol_\dims(K)}{\kappa_\dims}}\,h_{K}
    \leq (1+\epsilon)^{\frac{\dims+p}p},\]
    \[\frac{(1-\epsilon)^{\frac{\dims}p+1}}{(1+\epsilon)^{\frac{\dims}{p}}}
    \leq \frac{C_B}{C_K}\, h_{K}
    \leq \frac{(1+\epsilon)^{\frac{\dims}p+1}}{(1-\epsilon)^{\frac{\dims}{p}}},\]
and, since $1-\epsilon\leq h_{K}\leq 1+\epsilon$,     
    \[\left(\frac{1-\epsilon}{1+\epsilon}\right)^{\frac{\dims}p+1}
    \leq \frac{C_B}{C_K}
    \leq \left(\frac{1+\epsilon}{1-\epsilon}\right)^{\frac{\dims}p+1}.\]
   
    \subsection{The eigenvalues of $\calc^p$}\label{cpeig}
    The eigenspaces of the $p$-cosine transform are, as in the case of the cosine transform, the spaces of spherical harmonics of degree $k$.  From \cite[Eq. (3.4)]{Rubin} we have that the eigenvalues $\calc^\alpha \calp[\stdindex]=m_{\alpha,\stdindex}\calp[\stdindex]$ for $p\neq 2,4,\ldots$ are equal to 
    \[m_{p,\stdindex}=\begin{cases}
        \frac{2(-1)^{\stdindex/2}\pi^{(\dims-1)/2}}{\sigma_{\dims-1}}\frac{\Gamma((\stdindex-p)/2)}{\Gamma(-p/2)}\frac{\Gamma((p+1)/2)}{\Gamma((\stdindex+\dims+p)/2)}  &: 2\mid \stdindex\\
        0   &:  2\nmid \stdindex
    \end{cases}.\]
When $p$ is an even integer and $k>p$, the eigenvalues are 0. The above formula for the eigenvalues $m_{p,k}$ can be rewritten so that it works also when $p$ is an even integer and $k\leq p$. (For the reader who wants to consult reference \cite{Rubin}, we note that in that paper the $p$-cosine transform is denoted by $M^{\alpha+1}$). 

Observe that $\abs{m_{p,2\stdindex+2}/m_{p,2\stdindex}}=\abs{2\stdindex-p}/(2\stdindex+\dims+p)$. Expressing the higher order non-zero harmonics as a telescoping product of the lower order ones, we see that
    \[\abs{m_{p,2k}}=m_{p,0}\prod_{\ell=0}^{k-1}\abs{\frac{m_{p,2\ell+2}}{m_{p,2\ell}}}=m_{p,0}\prod_{\ell=0}^{k-1}\abs{\frac{2\ell-p}{2\ell+\dims+p}},\]
    hence the non-zero eigenvalues are absolutely strictly decreasing as $k\rightarrow\infty$, and their limit is $0$. 
    The asymptotic behaviour as $p$ varies is similar, for $m_{p,0}\rightarrow 0$ as $p\rightarrow \infty$. 
    \subsection{Linear approximation error}
    The $p$-cosine tranansform is a positive linear operator, therefore if we set $\alpha=\dims+p$, $\beta=1/p$ on the left hand side of \eqref{LinearizationError}, we have 
    \begin{align*}
        &\abs{(\calc^p\rho_K^{\dims+p})^{\frac1p}-r_0^{\frac{\dims+p}{p}}(\calc^p 1)^{\frac1p}-\frac{\dims+p}p (\calc^p 1)^{\frac{1-p}p}r_0^{\frac\dims p}\calc^p(\rho_K-r_0))}. 
    \end{align*}
    Taking the $L^2$ norm of this term and multiplying by the missing constants in equation \eqref{argp}, the left hand side can be rewritten as 
    \begin{equation}\label{along}
        \bigg\rvert\hspace{-1pt}\bigg\lvert\left(h_{K}-r_0^{\frac{\dims+p}{p}}\frac{C_K}{C_B}\frac{C_B}{\sqrt[p]{(\dims+p)\kappa_\dims}}(\calc^p 1)^{\frac1p}\right)
        \end{equation}
        \[
        -\left(r_0^{\frac\dims p}\, \frac{\dims+p}{p}\frac{C_K}{\sqrt[p]{(\dims+p)\kappa_\dims}}\left[
        (\calc^p 1)^{\frac1p}\right]^{1-p}\calc^p\right)(\rho_K-r_0)\bigg\rvert\hspace{-1pt}\bigg\lvert_{L^2}.
        \]
        Recalling that $C_B[(\dims+p)\kappa_\dims]^{-1/p}(\calc^p1)^{1/p}=1$, we can simplify  the last term, obtaining
    \[\left(r_0^{\frac\dims p}\frac{\dims+p}{p}\frac{C_K}{\sqrt[p]{(\dims+p)\kappa_\dims}}\left(\frac{C_B}{\sqrt[p]{(\dims+p)\kappa_\dims}}\right)^{p-1}\calc^p\right)(\rho_K-r_0) \]
    \[=\left(r_0^{\frac\dims p}\frac{C_K}{C_B}\right)\left(\frac{\dims+p}{p}\frac{C_B^p}{(\dims+p)\kappa_\dims}\calc^p\right)(\rho_K-r_0) \]
    \[=\left(r_0^{\frac\dims p}\frac{C_K}{C_B}\right)\left(\frac{C_B^p}{p \,\kappa_\dims}\calc^p\right)(\rho_K-r_0).\]
    Thus,   \eqref{along} simplifies to
    \begin{equation}\label{lhs}
            \norm[L^2]{\left(h_K-r_0^{\frac{\dims+p}p}\frac{C_K}{C_B}\right)-\left(r_0^{\frac\dims p}\frac{C_K}{C_B}\right)\left(\frac{C_B^p}{p\kappa_\dims}\calc^p\right)(\rho_K-r_0)}.
        \end{equation}
    Next, we will consider the right hand side of \eqref{LinearizationError}. After multiplying by $C_K[(\dims+p)\kappa_\dims]^{-1/p}$ and taking $L^2$ norms, it can be rewritten as 
    \[  C_{\dims+p,\frac1p}\,\calc^p1 \, \epsilon\left(\frac{C_K}{\sqrt[p]{ (\dims+p)\kappa_\dims}}\right)\norm[L^2]{\calc^p\abs{\rho_K-r_0}}\]
    \[=C_{\dims+p,\frac1p}\calc^p1\, \epsilon\frac{C_K}{C_B}\left(\frac{C_B}{\sqrt[p]{ (\dims+p)\kappa_\dims}}\right)^{1-p}\norm[L^2]{\frac{C_B^p}{(\dims+p)\kappa_\dims}\calc^p\abs{\rho_K-r_0}}\]
    \begin{equation}\label{rhs}
            =C_{\dims+p,\frac1p}\,\frac{C_K}{C_B}\left(\calc^p 1\right)^{2-\frac{1}p}\epsilon\norm[L^2]{\frac{C_B^p}{(\dims+p)\kappa_\dims}\calc^p\abs{\rho_K-r_0}}.
    \end{equation}
    Note  that the exponent $2-\frac{1}{p}$ on $\calc^p1$ is non-negative for $p\geq 1$ and that 
    \begin{align*}
        \calc^p1&=\frac{2\pi^{(\dims-1)/2}}{\sigma_{\dims-1}}\frac{\Gamma(\frac{p+1}{2})}{\Gamma(\frac{\dims+p}{2})}=\frac1{\sqrt\pi}\frac{\Gamma(\frac{\dims}{2})\Gamma(\frac{p+1}{2})}{\Gamma(\frac{\dims+p}{2})}.
    \end{align*}
    We claim that this expression is bounded above by $1$ when the dimension $n$ is greater than $2$.  This is obtained by using the property $z\Gamma(z)=\Gamma(z+1)$ several times to yield
    \[\calc^p1=\left[\frac{\Gamma\left(\frac\dims2-\left\lceil\frac n2\right\rceil+1\right)}{\sqrt\pi}\right]\left[\prod_{\ell=0}^{\left\lceil\frac n2\right\rceil-1}\left(\frac{\dims-2\ell}{\dims+p-2\ell}\right)\right]\left[\frac{\Gamma\left(\frac {p+1}2\right)}{\Gamma\left(\frac p2+\frac\dims2-\left\lceil\frac n2\right\rceil+1\right)}\right].\]
    Breaking up the three terms in the product, we note that the leftmost is bounded by $1$, the middle term is maximal when $p=1$ and $\dims=3$ (which are the minimal values of $p,n$), and the last term is also maximal when $p=1$ and $\dims$ odd (by the convexity of $\Gamma$), yielding
    \[\calc^p1\leq 1\cdot\frac{3}{4}\cdot\frac{1}{2}\cdot 1 <1.\]
    Using this estimate, and the fact that the $L^2$ norm of the normalized $p$-cosine transform is 1, \eqref{rhs} is bounded by  
    \[
      C_{\dims+p,\frac1p}\epsilon\frac{C_K}{C_B}\norm[L^2]{\rho_K-r_0}.
    \]
Therefore, combining this with \eqref{lhs}, we have the following estimate that we will need to apply Lemma 4.
    \begin{equation}\label{toapplylemma}   
    \norm[L^2]{\left(h_K-r_0^{\frac{\dims+p} p}\frac{C_K}{C_B}\right)-\left(r_0^{\frac\dims p}\frac{C_K}{C_B}\right)\left(\frac{C_B^p}{p\kappa_\dims}\calc^p\right)(\rho_K-r_0)}
    \end{equation}
    \[
        \leq C_{\dims+p,\frac1p}\epsilon\frac{C_K}{C_B}\norm[L^2]{\rho_K-r_0}.
    \]
    \subsection{The proof of Theorem 2.}
    The last thing that needs to be checked is that the operator $\frac{C^p_B}{p\kappa_\dims}\calc^p$  in  equation \eqref{toapplylemma} is a strong contraction (note that the constant multiplying this operator $r_0^{\frac\dims p}\frac{C_K}{C_B}$ is of the order $1+O(\epsilon)$, as discussed in subsection 5.2). As mentioned in Section \ref{cpeig}, the eigenvalues of the $p$-cosine transform for the harmonic spaces of even degree greater than or equal to 4  are less than 1, strictly decreasing in absolute value, and their limit is $0$. Thus, it is enough to check the eigenvalue of the degree four harmonic space of the scaled $p$-cosine transform $C_B^p[p\kappa_\dims]^{-1}\calc^p$. The computation yields
    \[\abs{\frac{\dims+p}{p}\frac{C_B^p}{(\dims+p)\kappa_\dims}m_{p,4}}=\abs{\frac{m_{p,4}}{m_{p,2}}}=\frac{\abs{2-p}}{\dims+2+p}<1\]
    From this point, the proof of Theorem \ref{cp} is identical to that of Theorem \ref{c1}. 

    \section{On Theorem \ref{ci}}\label{six}
    
    \subsection{Invariance under linear transformations}
          Let $C_{K}'$ be the constant associated with a body $K$ that satisfies the condition of Theorem \ref{ci}.  If we evaluate \eqref{IStarReformulation} when $K$ is the Euclidean ball, we see
        \begin{align*}
            (\calr1)&=C_B'(\dims-1)\left(\frac{\calc^p1}{{(\dims+1)\kappa_\dims}}\right)^{-\frac1p},\\
            C_B'&=\left(\frac{\calr1}{\dims-1}\right) \left(\frac{\calc^p1}{(\dims+1)\kappa_\dims}\right)^{\frac1p}.
        \end{align*}
        We next show that if $K$ satisfies $c\Gamma K=\cali^* K$, so does $TK$. 
            \begin{align*}
                \Gamma_p TK&=T\Gamma_p K=TC_K'(\cali K)^*=C_K'(T^{-*}\cali K)^*\\&=C_K'(\abs{\det(T)}^{-1}\cali TK)^*=C_K'\abs{\det T}\cali^* TK. 
            \end{align*}
            Therefore if $K$ satisfies $c\Gamma K=\cali^* K$, so does $TK$, albeit for a different constant, $C_{TK}'=C_K'\abs{\det T}$.
    
    \subsection{Stability of the constants.} Applying the normalized $p$-cosine transform and Radon transform to $1-\epsilon \leq \rho_K \leq 1+\epsilon$, we have 
    \[
        \frac{1}{(1+\epsilon)^{\frac{\dims+p}p}}\leq \left(\frac{\calc^p\rho_K^{\dims+p}}{(\dims+p)\vol_\dims(K)}\right)^{\frac{-1}p}\left(\frac{\calc^p1}{(\dims+p)\vol_\dims(K)}\right)^{\frac1p}\leq\frac{1}{(1-\epsilon)^{\frac{\dims+p}p}},
    \]
    and
    \[
        (1-\epsilon)^{\dims-1}\leq\left(\frac{\calr\rho_K^{\dims-1}}{\dims-1}\right)\left(\frac{\dims-1}{\calr1}\right)\leq(1+\epsilon)^{\dims-1}.
    \]
    If $K$ satisfies $c\Gamma^*K=\cali K$, taking the ratio of the two terms above and multiplying by \eqref{CentroidConstantBMVol} yields
    \[
        \frac{(1-\epsilon)^{\frac{\dims}{p}}}{(1+\epsilon)^{\frac{\dims+p}p}}\leq\frac{C_B'}{C_K'}\leq\frac{(1+\epsilon)^{\frac{\dims}{p}}}{(1-\epsilon)^{\frac{\dims+p}p}}.
    \]
    Thus, the ratio of the constants is of the order $1+O(\epsilon)$.  Applying an appropriate linear transformation to put $K$ in the isotropic position, we still have $C_K'=C_B'+O(\epsilon)$.
        
    \subsection{ Linearization estimate}
        From the calculation of the constant for the ball, we immediately have
        \begin{equation}\label{t3}
                \calr_1\rho_K^{\dims-1}=\frac{C_K'}{C_B'}\sqrt[p]{\frac{\kappa_\dims}{\vol_\dims(K)}}(\calc_1^p\rho_K^{\dims+p})^{\frac{-1}{p}},
        \end{equation}
        where $\calr_11=\calc_1^p1=1$ are the normalized Radon and $p$-Cosine transforms.  
        As we assume $1-\epsilon<\rho_K<1+\epsilon$, the ratio $\abs{\frac{\rho_K-1}{r_0}}<1$ for sufficiently small $\epsilon$.  Consequently, we have convergence of the binomial series expansion of $(r_0+(\rho_K-r_0))^{-\frac1p}$.  The binomial expansion of the right hand side yields,
        \begin{align*}
            \frac{C_K'}{C_B'}\sqrt[p]{\frac{\kappa_\dims}{\vol_\dims(K)}}\sum_{m=0}^\infty\binom{m+\frac1p-1}{m}(-1)^mr_0^{-(\dims+p)(\frac1p+m)}\left(\calc_1^p\rho_K^{\dims+p}-r_0^{\dims+p}\right)^m.
        \end{align*}
        Taking the inverse (normalized) Radon transform of both sides of \eqref{t3}, and separating the constant \eqref{calip_gammma_Linearization_constant}, linear \eqref{calip_gammma_Linearization_linear}, and higher order terms \eqref{calip_gammma_Linearization_HOTC}-\eqref{calip_gammma_Linearization_HOTC-1} yields
        \begin{align}\label{calip_gammma_Linearization_constant}
            &\rho_K-r_0+\frac{r_0}{\dims-1}-\frac{C_K'}{C_B'}\sqrt[p]{\frac{\kappa_\dims}{\vol_\dims(K)}}\frac{r_0^{1-n-\frac{\dims}p}}{\dims-1}\\
            \label{calip_gammma_Linearization_linear}
            &=-\left(\frac{C_K'}{C_B'}\sqrt[p]{\frac{\kappa_\dims}{\vol_\dims(K)}}r_0^{-n(1+\frac{1}p)}\right)\left(\frac{\dims+p}{p(\dims-1)}\calr_1^{-1}\circ\calc_1^p\right)\left(\rho_K-r_0\right)\\
            \label{calip_gammma_Linearization_HOTC}
            &+\frac{C_K'}{C_B'}\sqrt[p]{\frac{\kappa_\dims}{\vol_\dims(K)}}\sum_{m=2}^\infty\binom{m+\frac1p-1}{m}\frac{(-1)^mr_0^{1-mp-n(m+\frac1p+1)}}{\dims-1}\calr_1^{-1}\left(\calc_1^p\rho_K^{\dims+p}-r_0^{\dims+p}\right)^m\\
            \label{calip_gammma_Linearization_HOTR}
            &+\frac1{\dims-1}\sum_{m=2}^{\dims-1}\binom{\dims-1}{m}\left(\rho_K-r_0\right)^{m}r_0^{\dims-1-m}\\
            \label{calip_gammma_Linearization_HOTC-1}
            &+\frac{C_K'}{C_B'}\sqrt[p]{\frac{\kappa_\dims}{\vol_\dims(K)}}\frac1{p(\dims-1)}\sum_{m=2}^{\dims+p} \binom{\dims+p}{m}r_0^{(\frac1p-m)(\dims+p)-1}(\calr^{-1}\circ\calc_1^p)(\rho_k-r_0)^m.
        \end{align}
         Assuming that $\norm[\infty]{\rho_K-r_0}<\epsilon$, we observe a few facts for the above equation.  The linear term \eqref{calip_gammma_Linearization_linear} consists of a constant of the order $1+O(\epsilon)$ multiplying an  operator that will be shown to be a contraction.  The higher order terms \eqref{calip_gammma_Linearization_HOTC}, \eqref{calip_gammma_Linearization_HOTR} and \eqref{calip_gammma_Linearization_HOTC-1} will be shown to be of the order of $O(\epsilon^2)$.  
\subsection{Proof of Theorem \ref{ci}} We will assume for now that the terms \eqref{calip_gammma_Linearization_HOTC}-\eqref{calip_gammma_Linearization_HOTC-1} are indeed of the order $O(\epsilon^2)$, and focus on terms \eqref{calip_gammma_Linearization_constant} and \eqref{calip_gammma_Linearization_linear}.  
   Taking $L^2$ norms,  we obtain,
    \begin{align*}
        &\norm[2]{\rho_K-r_0}\leq\norm[2]{\rho_K-r_0-\frac{C_K'}{C_B'}\sqrt[p]{\frac{\kappa_\dims}{\vol_\dims(K)}}\frac{r_0^{\frac{\dims}p}}{\dims-1}-\frac{r_0^{\dims-1}}{\dims-1}}\\
        &\leq \left(\frac{C_K'}{C_B'}\sqrt[p]{\frac{\kappa_\dims}{\vol_\dims(K)}}r_0^{\frac{1-p}p+n+p}\norm[L^2\rightarrow L^2]{\frac{\dims+p}{p(\dims-1)}\calr_1^{-1}\circ\calc_1^p}+O(\epsilon)\right)\norm[2]{\rho_K-r_0},
    \end{align*}
    where $\norm[L^2\rightarrow L^2]\cdot$ is the operator norm.
    The first inequality follows from the orthogonality of the spherical harmonic spaces and the definition of $r_0$.  If we are able to show that the term in parenthesis on the right hand side is strictly less than one for any given $p\geq1$ and $\epsilon$ small enough, then there will exist some $\epsilon_0$ such that $\epsilon<\epsilon_0$ implies $\norm[2]{\rho_K-r_0}=0$ or, equivalently, $\rho_K=r_0$. 
    
   We will check the operator norm $\norm[L^2\rightarrow L^2]{\frac{\dims+p}{p(\dims-1)}\calr_1^{-1}\circ\calc_1^p}$, as the terms multiplying it have already been shown to be $1+O(\epsilon)$ in subsection 6.2.  The eigenspaces of the spherical Radon transform are the spherical harmonic spaces and the eigenvalues of the normalized spherical Radon transform can be seen below 
   \[
        \calr\calp[\stdindex]=\sigma_{\dims-1}(-1)^{\frac \stdindex2}\mu_\stdindex\calp[\stdindex]=\begin{cases}
            \sigma_{\dims-1}\calp[\stdindex]&: \stdindex=0\\
            \sigma_{\dims-1}(-1)^{\frac \stdindex2}\frac{(k-1)!!(\dims-3)!!}{(\dims+k-3)!!}\calp[\stdindex]&: 2\mid \stdindex\text{ and }k\neq 0\\
            0 &:  2\nmid \stdindex,
        \end{cases}
   \]
   see for example \cite[Eq 3.4.16 pg 103]{Gro}.
   Combining this with the eigenvalues for the $p$-cosine transform, we have that
    \[
        \norm[L^2\rightarrow L^2]{\frac{\dims+p}{p(\dims-1)}\calr_1^{-1}\circ\calc_1^p\circ\calp[2\stdindex]}=\frac{\dims+p}{p(\dims-1)}\prod_{\ell=0}^{k-1}\abs{\frac{2\ell-p}{2\ell+1}\,\,\frac{2\ell+\dims-1}{2\ell+\dims+p}}.
        \]
       For $k=1$, the right hand side equals 1. As $k$ increases, each additional term in the product is strictly less than 1.  Therefore this operator norm is less than $1$.
    
    \subsection{Higher order terms}
    The fact that \eqref{calip_gammma_Linearization_HOTR} and \eqref{calip_gammma_Linearization_HOTC-1} are of the order $O(\epsilon^2)$ is immediate since $m \geq 2$. This leaves just \eqref{calip_gammma_Linearization_HOTC} to be estimated.
     
     We need to introduce a family of operators to help estimate the composition of the inverse Radon transform and the powers of the $p$-cosine transform.  Let $\mathbbm{1}_I:\bbr\rightarrow \{0,1\}$ be the indicator function of the interval $[0,1]$, and define $M_k:L^2\rightarrow L^2$ by
    \[M_k=\sum_{j=0}^{\infty}\mathbbm{1}_I\left(\frac j{2^k}\right)\calp[j],\] 
    and $\widetilde{M}_0:=M_0$ and $\widetilde{M}_{k}:=M_{k+1}-M_{k}$.  Observe that
    \[\norm[L^2]{M_k f}\leq \norm[L^2]{f},\hspace{40pt}\norm[L^\infty]{\calp_k f}\leq \left(\frac{\dim(\calh_{k}^{\dims})}{\abs{\calsn}}\right)^{1/2}\norm[L^2]{f},\]
    where the last estimates can be found in \cite[Eq. 2.48, pg. 27]{AH}.  For $f\in C(\calsn)$, $M_kf\rightarrow f$ in $L^2$ as well.  Thus,  for any $m\in\bbn$
    \begin{align*}
        (f)^m&=\sum_{k=0}^\infty (M_{k+1}f)^m-(M_kf)^m\\
            &=\sum_{k=0}^\infty \left(M_{k+1}f-M_kf\right)\left(\sum_{j=0}^{m-1}(M_{k+1}f)^j(M_{k}f)^{m-1-j}\right)\\
            &=\sum_{k=0}^\infty \left(\widetilde{M}_{k}f\right)\left(\sum_{j=0}^{m-1}(M_{k+1}f)^j(M_{k}f)^{m-1-j}\right),
    \end{align*}
    where the last equality is uses the linearity of $M_k$ and the definition of $\widetilde{M}_k$.  Breaking each of the summands into their dyadic harmonic projections yields
    \begin{align*}
            (f)^m&=\sum_{b=0}^\infty\sum_{k=0}^\infty\widetilde{M}_{b}\left[\left(\widetilde{M}_{k}f\right)\left(\sum_{j=0}^{m-1}(M_{k+1}f)^j(M_{k}f)^{m-1-j}\right)\right].
    \end{align*}
    While the product of two harmonic polynomials need not be harmonic, any polynomial when restricted to the sphere may be written as the sum of homogeneous, harmonic polynomials of lesser or equal degree (see \cite[Lemma 3.2.5, pg. 69]{Gro} or \cite[Theorem 2.18, pg 31]{AH}).
    Thus, if the degree of the polynomial in square brackets is smaller than $2^b$, then the operator $\widetilde{M}_b$ will give $0$ identically. For non-zero terms, the degree of the polynomial in brackets is
    \[2^b\leq 2^{k+1}+(m-1)2^{k+1}=m2^{k+1}\implies b\leq k+\log_2(2m).\]
    Now, taking the inverse Radon transform and $L^2$ norms on both sides, we see that the right hand side is bounded by
    \begin{align*}
        & \sum_{b=0}^{\infty}\norm[L^2]{\calr^{-1}_1\widetilde{M}_{b}\sum_{k=b-\lfloor\log_2(2m)\rfloor}^{\infty}\left[\left(\widetilde{M}_{k}f\right)\left(\sum_{j=0}^{m-1}(M_{k+1}f)^j(M_{k}f)^{m-1-j}\right)\right]}\\
        &\leq \sum_{b=0}^{\infty}\mu_{2^{b+1}}^{-1}\norm[L^2]{\sum_{k=b-\lfloor\log_2(2m)\rfloor}^{\infty}\left[\left(\widetilde{M}_{k}f\right)\left(\sum_{j=0}^{m-1}(M_{k+1}f)^j(M_{k}f)^{m-1-j}\right)\right]}\\
        &\leq \sum_{b=0}^{\infty}\mu_{2^{b+1}}^{-1}\sum_{k=b-\lfloor\log_2(2m)\rfloor}^{\infty}\norm[L^2]{\widetilde{M}_{k}f}\left(\sum_{j=0}^{m-1}\norm[L^\infty]{M_{k+1}f}^j\norm[L^\infty]{M_{k}f}^{m-1-j}\right).
    \end{align*}
    
    Setting $f=\calc_1^p g$, where $g=\rho_K^{\dims+p}-r_0^{\dims+p}$,       
    we obtain the following  bound on the $L^\infty$ norm,
    \begin{align*}
        \norm[L^\infty]{M_k\calc_1^p g}&\leq\sum_{a=0}^{2^{k}}\norm[L^\infty]{\calp[a]\calc_1^p g}\leq\frac{1}{\sqrt{\abs{\calsn}}}\sum_{a=0}^{2^{k}}\abs{\frac{m_{p,a}}{m_{p,0}}}\sqrt{\dim(\calhnd[a])}\norm[L^2]{\calp[a]g}\\
        &\leq\frac{\sup_a\{\abs{m_{p,a}}\sqrt{\dim(\calhnd[a])}\}}{\abs{m_{p,0}}\sqrt{\abs{\calsn}}}\norm[L^2]{g}.
    \end{align*}
    Observe that $\abs{m_{p,a}}=O(a^{-(\dims+2p)/2})$ and $\dim(\calhnd[a])=O(a^\dims)$, so this supremum is indeed finite though dependent on the dimension.  Simplifying,
    \begin{align}\label{summ}
        \norm[L^2]{\calr^{-1}(\calc_1^p g)^m}\leq
    \end{align}
    \[\leq mC^{m-1}\norm[L^2]{g}^{m}\left(\sum_{b=0}^{\lfloor\log_2(m)\rfloor}\mu_{2^{b+1}}^{-1}+\sum_{b=\lfloor\log_2(2m)\rfloor}^{\infty}\mu_{2^{p+1}}^{-1}\abs{\frac{m_{p,2^{b-\lfloor\log_2(2m)\rfloor}}}{m_{p,0}}}\right).\]
   where $C=\max_k\norm[L^\infty\rightarrow L^2]{M_k\calc_1^p}$.
   Now we are in a position to estimate the higher order term \eqref{calip_gammma_Linearization_HOTC}.  Including the binomial coefficient and power of $r_0$ changing with $m$, we see

    \begin{align}
        &\binom{m+\frac1p-1}{m}\frac{r_0^{m(p-n)}}{\dims-1}\norm[L^2]{\calr^{-1}(\calc_1^p g)^m}\leq \frac{(m+1)\Gamma(\frac1p)}{(1-\epsilon)^{m\abs{n-p}}}mC^{m-1}\norm[L^2]{g}^{m}\cdot\\\nonumber &\cdot\left(\sum_{b=0}^{\lfloor\log_2(m)\rfloor}\mu_{2^{b+1}}^{-1}+\sum_{b=\lfloor\log_2(2m)\rfloor}^{\infty}\mu_{2^{p+1}}^{-1}\abs{\frac{m_{p,2^{b-\lfloor\log_2(2m)\rfloor}}}{m_{p,0}}}\right).
    \end{align}
    The right hand side is of the order $O(\epsilon^{m})$ times the sum in parentheses.  Note that the sums are constant for $m\in [2^a,2^{a+1})$ and at these  values of $m$ we get a right hand side of
        \[
        \epsilon^{2^a}\left(\sum_{b=0}^{a}\mu_{2^{b+1}}^{-1}+\sum_{b=0}^\infty\mu_{2^{b+a+2}}^{-1}\abs{\frac{m_{p,2^{b}}}{m_{p,0}}}\right).\]
    Summing over $m\in [2^a,2^{a+1})\cap \bbn$, we then obtain a right hand side of
    \[\left[\frac{1-\epsilon^{2^{a}+1}}{1-\epsilon}\right]\epsilon^{2^a}\left(\sum_{b=0}^{a}\mu_{2^{b+1}}^{-1}+\sum_{b=0}^\infty\mu_{2^{b+a+2}}^{-1}\abs{\frac{m_{p,2^{b}}}{m_{p,0}}}\right).\]
    The term in square brackets  is $1+O(\epsilon)$ and will be omitted from now on.  Now, summing over $a\in\bbn$ we get an upper bound for \eqref{calip_gammma_Linearization_HOTC},
    \[\eqref{calip_gammma_Linearization_HOTC}\leq \sum_{a=1}^{\infty}\epsilon^{2^a}\left(\sum_{b=0}^{a}\mu_{2^{b+1}}^{-1}+\sum_{b=0}^\infty\mu_{2^{b+a+2}}^{-1}\abs{\frac{m_{p,2^{b}}}{m_{p,0}}}\right).\]
    Since $\mu_k^{-1}=O(k^{\frac\dims2})$ and $m_{p,k}=O(k^{-\frac{\dims+2p}2})$, we get the estimate
    
    \begin{align*}
        \eqref{calip_gammma_Linearization_HOTC}&\leq\sum_{a=1}^\infty \epsilon^{2^{a}}\left(\sum_{b=0}^{a}2^{(b+1)\frac{\dims}{2}}+\sum_{b=0}^\infty 2^{(b+a+2)\frac{\dims}2}2^{-b\frac{\dims+2}2}\right).\end{align*}
    Evaluating the partial sums of the geometric series, we have
    \begin{align*}
        \eqref{calip_gammma_Linearization_HOTC}\leq\sum_{a=1}^\infty \epsilon^{2^{a}}\left(\frac{2^{(a+2)\frac{\dims}{2}}-1}{2^{\frac{\dims}{2}}-1}+2^{(a+2)\frac{\dims}{2}+1}\right)
        \leq \frac{\epsilon^2}{1-\epsilon} C_\dims
    \end{align*}
    Finally, picking epsilon small enough then yields the desired estimate.
    
    \bigskip
    
    {\it Acknowledgements:}  I would like to recognize my advisor Mar\'ia de los \'Angeles Alfonseca for her guidance in this work.  I would also like to thank Fedor Nazarov for his advice to use the $\calm_p$ operators for the final estimate.  I also thank Dmitry Ryabogin and Vladyslav Yaskin for several fruitful discussions.  
   
    \bibliographystyle{plain}
    \bibliography{Main_Clean-Rev1}

\begin{thebibliography}{10}

\bibitem{ANRY}
M.~Angeles Alfonseca, Fedor Nazarov, Dmitry Ryabogin, and Vladyslav Yaskin.
\newblock A solution to the fifth and the eighth {B}usemann-{P}etty problems in
  a small neighborhood of the {E}uclidean ball.
\newblock {\em Adv. Math.}, 390:Paper No. 107920, 28, 2021.

\bibitem{AH}
Kendall Atkinson and Weimin Han.
\newblock {\em Spherical harmonics and approximations on the unit sphere: an
  introduction}, volume 2044 of {\em Lecture Notes in Mathematics}.
\newblock Springer, Heidelberg, 2012.

\bibitem{BGVV}
Silouanos Brazitikos, Apostolos Giannopoulos, Petros Valettas, and
  Beatrice-Helen Vritsiou.
\newblock {\em Geometry of isotropic convex bodies}, volume 196 of {\em
  Mathematical Surveys and Monographs}.
\newblock American Mathematical Society, Providence, RI, 2014.

\bibitem{BP}
H.~Busemann and C.~M. Petty.
\newblock Problems on convex bodies.
\newblock {\em Math. Scand.}, 4:88--94, 1956.

\bibitem{FNRZ}
Alexander Fish, Fedor Nazarov, Dmitry Ryabogin, and Artem Zvavitch.
\newblock The unit ball is an attractor of the intersection body operator.
\newblock {\em Adv. Math.}, 226(3):2629--2642, 2011.

\bibitem{Gar}
Richard~J. Gardner.
\newblock {\em Geometric tomography}, volume~58 of {\em Encyclopedia of
  Mathematics and its Applications}.
\newblock Cambridge University Press, New York, second edition, 2006.

\bibitem{Gro}
H.~Groemer.
\newblock {\em Geometric applications of {F}ourier series and spherical
  harmonics}, volume~61 of {\em Encyclopedia of Mathematics and its
  Applications}.
\newblock Cambridge University Press, Cambridge, 1996.

\bibitem{LYZ}
Erwin Lutwak, Deane Yang, and Gaoyong Zhang.
\newblock {$L_p$} affine isoperimetric inequalities.
\newblock {\em J. Differential Geom.}, 56(1):111--132, 2000.

\bibitem{Petty}
C.~M. Petty.
\newblock On the geometry of the {M}inkowski plane.
\newblock {\em Riv. Mat. Univ. Parma}, 6:269--292, 1955.

\bibitem{Rubin}
Boris Rubin.
\newblock Intersection bodies and generalized cosine transforms.
\newblock {\em Adv. Math.}, 218(3):696--727, 2008.

\bibitem{Ry1}
Dmitry Ryabogin.
\newblock On bodies floating in equilibrium in every orientation.
\newblock {\em Geom. Dedicata}, 217(4):Paper No. 70, 17, 2023.

\bibitem{SZ}
C.~Saroglou and A.~Zvavitch.
\newblock Iterations of the projection body operator and a remark on {P}etty's
  conjectured projection inequality.
\newblock {\em J. Funct. Anal.}, 272(2):613--630, 2017.

\bibitem{Sch}
Rolf Schneider.
\newblock {\em Convex bodies: the {B}runn-{M}inkowski theory}, volume 151 of
  {\em Encyclopedia of Mathematics and its Applications}.
\newblock Cambridge University Press, Cambridge, expanded edition, 2014.

\end{thebibliography}
     \end{document}